\documentclass[12pt,reqno]{amsart}
\sloppy
\usepackage{amsfonts}
\usepackage{amssymb}
\usepackage{a4wide}
\usepackage{pgf,pgfarrows,pgfautomata,pgfheaps,pgfnodes,pgfshade}
\usepackage{url}
\usepackage{lineno}
\usepackage{graphicx}
\usepackage[all]{xy}
\usepackage{relsize}
\usepackage{hyperref}

\numberwithin{equation}{section}

\newtheorem{defi}{Definition}[section]
\newtheorem{thm}[defi]{Theorem}
\newtheorem{lemm}[defi]{Lemma}
\newtheorem{rem}[defi]{Remark}

\newtheorem{cor}[defi]{Corollary}
\newtheorem{prop}[defi]{Proposition}
\newtheorem{exam}[defi]{Example}

\newcommand{\esssup}{{\mathrm{ess}\sup}}

\newcommand{\R}{\mathbb{R}}

\begin{document}

\title[Malliavin and flow regularity of SDEs.]
{Malliavin and flow regularity of SDEs. Application to the study of densities and the stochastic transport equation.}
\author[D. Ba\~{n}os]{David Ba\~{n}os}
\address{D. Ba\~{n}os: CMA, Department of Mathematics, University of Oslo, Moltke Moes vei 35, P.O. Box 1053 Blindern, 0316 Oslo, Norway.}
\email{davidru@math.uio.no}
\author[T. Nilssen]{Torstein Nilssen}
\address{T. Nilssen: CMA, Department of Mathematics, University of Oslo, Moltke Moes vei 35, P.O. Box 1053 Blindern, 0316 Oslo, Norway.}
\email{t.k.nilssen@cma.uio.no}

\maketitle


\begin{abstract}
In this work we present a condition for the regularity, in both space and Malliavin sense, of strong solutions to SDEs driven by Brownian motion. We conjecture that this condition is optimal. As a consequence, we are able to improve the regularity of densities of such solutions.

We also apply these results to construct a classical solution to the stochastic transport equation when the drift is Lipschitz.
\end{abstract}
 
\vskip 0.1in
\textbf{Key words and phrases}: strong solutions of SDEs, Malliavin regularity, Sobolev regularity, regularity of densities, stochastic transport equation.

\textbf{MSC2010:}  60H07, 60H15, 60H40.

\section{Introduction}


This paper is mainly divided into two parts. First, we are interested in studying the regularity properties of the following Stochastic Differential Equation (SDE)
\begin{align}\label{SDE0}
dX_t = b(t,X_t)dt+dB_t, \ 0\leq t \leq T, \ X_0 =x\in \R^d,
\end{align}
where $B_t$, $t\in [0,T]$ is a $d$-dimensional Brownian motion and $b$ is a measurable function such that a unique strong solution exists. Our goal is to analyse the regularity of strong solutions to (\ref{SDE0}) both in space and in the Malliavin sense. We give a condition based on the regularity properties of $b$ to obtain regularity properties of $X_t$, $t\in [0,T]$. Then we study the consequences of the aforementioned properties and we take two different directions. On one hand, the Malliavin regularity allows us to improve the regularity of densities of strong solutions. On the other hand, the regularity in space entitles us to study the associated Stochastic Transport Equation and gain more regularity on the solution. Namely, for $b$ Lipschitz we are able to show that one obtains a classical solution to the Stochastic Transport Equation.

Considerable research in the direction of regularity of densities of solutions to SDEs has been done in the past years. There are well-known results on conditions for a density to be smooth when the coefficients are smooth, for example, in \cite{Nua10} or in the case of SDEs with boundary conditions in \cite{KHSS97} under the so-called H\"{o}rmander's condition.

We highlight the work by S. Kusuoka and D. Stroock in \cite{Kus82} where the authors show that if \mbox{$b\in C_b^{n+2}(\R^d)$} then the density lies in $C_b^n(\R^d)$ using Sobolev inequalities associated to the \mbox{$H$-derivative} of the solution. Here, we improve the regularity of the density and skip the boundedness of $b$, instead we consider additive noise and provide an extension to a class of non-degenerate diffusion coefficients. In \cite{Kus10} S. Kusuoka also gives conditions for the law to be absolutely continuous with respect to the Lebesgue measure when drift coefficients are non-Lipschitz, his work is closely related to the findings of N. Bouleau and F. Hirsch in \cite{BH91} where the authors show that for global Lipschitz coefficient a density exists, here we show that such density is H\"{o}lder continuous with exponent $\alpha<1$. An improvement was given in dimension one in \cite{FP10} where they show that for a H\"{o}lder continuous drift of, at most, linear growth and H\"{o}lder continuous diffusion coefficient the solution admits densities at any given time.

Our technique is mainly based on Malliavin calculus and on a sharp estimate on the moments of the derivative of the flow associated to the solution motived by a previous work in \cite{MNP14} where a very similar estimate is used to prove that solutions of SDE's with irregular drift coefficients are once Sobolev differentiable in the initial condition. SDE's with irregular drift coefficients has been a very active topic of research in the last years. For example, it is known, see \cite{Men11}, where the authors prove that solutions of SDEs with merely bounded drift are Malliavin differentiable. Nevertheless, once Malliavin differentiability is not enough to guarantee regularity of the densities. A related result on this matter is \cite{AP14} where the drift is assumed to be irregular and a representation for the derivative is also obtained, even in the higher dimensional case. Also, \cite{Zhang11} deals with the same problem as in \cite{MNP14} but here, the diffusion is non-trivial. Here instead, we look at the regularity of the drift coefficient and connect it to regularity of the solution in both the Malliavin and Sobolev sense.

For the study of densities, we use a powerful result by V. Bally and L. Caramellino in \cite{Bally2011} which allows us directly to improve the regularity of densities of solutions with with sufficient Malliavin regularity. In addition, we also look at the regularity in space. As a consequence of the relationship between the Malliavin and Sobolev derivatives we are also able to give a condition to determine the regularity  of solutions to (\ref{SDE0}) in the Sobolev sense (locally) and show that such derivatives admit moments of any order. At the end of the section we also give an extension to more general diffusions. 

The last application of the paper is devoted to the study of the Stochastic Transport Equation (STE) since it is closely related to the SDE (\ref{SDE0}) by the inverse of the flow of the solution. We use the results obtained in the first part of the paper to show that, for $b$ Lipschitz, the solution is classical. Work in the direction of SPDE's and in particular the Stochastic Transport Equation has brought a lot of interests in the last years. In \cite{FGP10} F. Flandoli, M. Gubinelli and E. Priola study the well-posedness for H\"{o}lder-continuous drifts and show pathwise uniqueness of the weak solution. In \cite{Nil13}, in dimension one, it is shown that when the drift is a step function then the solution to the transport equation is even once continuously differentiable.

\section{Framework}

In this section we recall some facts from Malliavin calculus and Sobolev spaces, which we aim at employing in Section \ref{main results} to analyse the regularity of densities of strong solutions of SDEs. See \cite{Nua10, Mall78, Mall97, DOP08} for a deeper insight on Malliavin Calculus. As for theory on Sobolev spaces the reader is referred to \cite{Leo09, Evan10}.

\subsection{Basic elements of Malliavin Calculus}\label{somedefmalcal}

In this Section we briefly elaborate a framework for Malliavin calculus.

Let $H$ be a real separable Hilbert space and $W=\{W(h), h\in H\}$ an isonormal Gaussian process, see \cite[Definition 1.1.1]{Nua10}. Assume $W$ is defined on a complete probability space $\left( \Omega ,\mathcal{F},P\right)$ and $\mathcal{F}$ is generated by $W$.

Denote by $D$ the derivative
operator acting on elementary smooth random variables in the sense that%
\begin{equation*}
D(f(h_{1},\ldots,h_{n}))=\sum_{i=1}^{n}\partial
_{i}f(h_{1},\ldots,h_{n})h_{i},\quad h_{i}\in H, \quad f\in C_{b}^{\infty }(\mathbb{R%
}^{n}).
\end{equation*}
Further let $\mathbb{D}^{k,p} (\Omega)$, $k,p\geq 1$ be the closure of the family of elementary smooth random variables with respect to the norm%
\begin{equation*}
\left\Vert F\right\Vert _{\mathbb{D}^{k,p}(\Omega)}:=\left\Vert F\right\Vert _{L^{p}(\Omega
)}+\sum_{i=1}^k \left\Vert D\overset{i)}{\cdots} DF\right\Vert _{L^{p}(\Omega ;H\otimes \overset{i)}{\cdots} \otimes H)}.
\end{equation*}

Our framework will rely on the special case where $H$ is isometric to $L^2([0,T];\R^d)$ endowed with the Lebesgue measure, then we have that the Malliavin derivative is a process $\{D_t F\}_{t\in [0,T]}$ in $L^2(\Omega\times [0,T];\R^d)$ defined as
$$D_tF = \sum_{i=1}^n \frac{\partial}{\partial x_i} f\left(\int_0^T h_1(u)dW_u, \dots, \int_0^T h_n(u)dW_u \right) h_i(t)$$
and again we take the closure w.r.t. the norm
$$\|F\|_{\mathbb{D}^{k,p}(\Omega)} = E[|F|^p]^{1/p}+ \sum_{i=1}^k E\left[ \int_0^T\cdots \int_0^T \|D_{t_1}\cdots D_{t_i} F\|^p dt_1\cdots t_i\right]^{1/p}$$
where $\|\cdot\|$ denotes any norm in $\R^{d\times \overset{i)}{\cdots} \times d}$.

The operator $D\overset{k)}{\cdots} D$ is then a closed operator from $\mathbb{D}^{k,p} (\Omega)$ to $L^p(\Omega \times [0,T]^k ; \R^{d\times \overset{k)}{\cdots} \times d})$ for all $p\geq 1$. Moreover, for $p\leq q$ and $k\leq l$ we have
$$\|F\|_{\mathbb{D}^{k,p}(\Omega)} \leq \|F\|_{\mathbb{D}^{l,q}(\Omega)}$$
and as a consequence
$$\mathbb{D}^{k+1,p}(\Omega) \hookrightarrow \mathbb{D}^{k,q} (\Omega)$$
if $k\geq 0$ and $p>q$.

We shall say that a random variable is $k$-times \emph{Malliavin differentiable} with derivatives in $L^p(\Omega)$, $p\geq 1$ if it lies on $\mathbb{D}^{k,p}(\Omega)$.

Finally, we have the chain-rule for the Malliavin derivative. Let $\varphi :\R^m\rightarrow \R^m$ be a function such that
$$|\varphi(x) - \varphi(y)| \leq K |x-y|$$
for any $x,y \in \R^m$. Suppose $F=(F^1,\dots,F^m)$ is a random vector whose components belong to the space $\mathbb{D}^{1,2}(\Omega)$. Then $\varphi(F) \in \mathbb{D}^{1,2}(\Omega)$ and there exists a random vector $G=(G_1,\dots,G_m)$ bounded by $K$ such that
$$D \varphi(F) = \sum_{i=1}^mG_i DF^i.$$

In particular if $\varphi'$ exists, then $G= \varphi'(F)$.

For a stochastic process $v \in \mbox{Dom}( \delta)$ (not necessarily adapted to $\{\mathcal{F}_t\}_{t\in [0,T]}$) we denote by
\begin{align}\label{skorokhod}
\delta(v) := \int_0^T v_t \delta B_t
\end{align}
the action of $\delta$ on $v$. The above expression (\ref{skorokhod}) is known as the Skorokhod integral of $v$ and it is an anticipative stochastic integral. It turns out that all $\{\mathcal{F}_t\}_{t\in [0,T]}$-adapted processes in $L^2(\Omega\times[0,T])$ are in the domain of $\delta$ and for such processes $v_t$ we have
$$\delta( v ) =\int_0^T v_t dB_t,$$
i.e.the Skorokhod and It\^{o} integrals coincide. In this sense, the Skorokhod integral can be considered to be an extension of the It\^{o} integral to non-adapted integrands.

The dual relation between the Malliavin derivative and the Skorokhod integral implies the following important formula:

\begin{thm}[Duality formula]\label{duality}
Let $F\in \mathbb{D}^{1,2}$ and $v\in \mbox{Dom}(\delta)$. Then
\begin{align}
E\left[ F\int_0^T v_t \delta B_t\right] = E\left[ \int_0^T v_t D_t F dt\right].
\end{align}
\end{thm}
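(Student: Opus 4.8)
The plan is to exploit that $\delta$ is by construction the adjoint of the Malliavin derivative $D$, so that the asserted identity is the defining duality once it is verified on a class of objects dense enough to invoke the closability of both operators. Concretely, I would first establish the formula when $F$ is an elementary smooth functional $F=f(W(h_1),\dots,W(h_n))$ with $f\in C_b^\infty(\R^n)$ and $v$ is an elementary process $v_t=\sum_{j=1}^m G_j h_j(t)$ with $G_j$ smooth and $h_j\in H$, and then remove these restrictions by a density and limiting argument.

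For the base case the essential tool is the Gaussian integration-by-parts formula: for a smooth functional $F$ and any $h\in H$ one has $E[\langle DF,h\rangle_H]=E[F\,W(h)]$. Applying this to the product $F G_j$ and using the Leibniz rule $D(FG_j)=G_j\,DF+F\,DG_j$ yields
\begin{align*}
E\left[\langle DF,h_j\rangle_H\,G_j\right]=E\left[F\,G_j\,W(h_j)\right]-E\left[F\,\langle DG_j,h_j\rangle_H\right].
\end{align*}
Recognizing the right-hand side as $E[F\,\delta(G_j h_j)]$, where $\delta(G_j h_j)=G_j W(h_j)-\langle DG_j,h_j\rangle_H$ is the explicit Skorokhod integral of the elementary process, and summing over $j$, gives exactly $E[F\,\delta(v)]=E[\int_0^T v_t D_t F\,dt]$ for the elementary pair $(F,v)$.

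To pass to general $F\in\mathbb{D}^{1,2}$ and $v\in\mbox{Dom}(\delta)$ I would use that the elementary smooth functionals are dense in $\mathbb{D}^{1,2}$ and that $\delta$ is closed. Choosing elementary $F_k\to F$ in $\mathbb{D}^{1,2}$, the left-hand sides $E[F_k\,\delta(v)]$ converge to $E[F\,\delta(v)]$ by $L^2(\Omega)$ continuity (using $\delta(v)\in L^2(\Omega)$), while the right-hand sides $E[\int_0^T v_t D_t F_k\,dt]=E[\langle v,DF_k\rangle_H]$ converge to $E[\langle v,DF\rangle_H]$ because $DF_k\to DF$ in $L^2(\Omega;H)$ and $v\in L^2(\Omega;H)$. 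Matching the two limits yields the identity in full generality.

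The main obstacle is not the computation but the bookkeeping of the functional-analytic extension: one must check that the class on which the explicit Skorokhod formula holds is genuinely dense in $\mbox{Dom}(\delta)$ for the graph norm, and that the duality, once established on simple objects, characterizes $\delta(v)$ for every $v\in\mbox{Dom}(\delta)$. This is precisely the point at which the definition of $\mbox{Dom}(\delta)$ as the adjoint domain of $D$ is invoked, and it is what makes the limiting passage legitimate rather than circular.
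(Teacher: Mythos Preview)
The paper does not supply a proof of this theorem: it is stated in the framework section as a standard fact, introduced by the sentence ``The dual relation between the Malliavin derivative and the Skorokhod integral implies the following important formula'', with references to Nualart and the other Malliavin calculus texts. In that setup $\delta$ is \emph{defined} as the adjoint of $D$, so for any $v\in\mbox{Dom}(\delta)$ the element $\delta(v)\in L^2(\Omega)$ is characterized precisely by the relation $E[F\,\delta(v)]=E[\langle DF,v\rangle_H]$ for all $F\in\mathbb{D}^{1,2}$; the theorem is therefore a restatement of the definition rather than a result requiring an independent argument.

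Your proposal is mathematically sound, but it does more than is needed and in a sense works backwards. The elementary computation you perform, together with the explicit formula $\delta(G_j h_j)=G_j W(h_j)-\langle DG_j,h_j\rangle_H$, is really the verification that elementary processes lie in $\mbox{Dom}(\delta)$ and identifies what $\delta$ does to them; it is not needed to establish the duality itself. Once you accept the adjoint definition, the identity holds for every $F\in\mathbb{D}^{1,2}$ and every $v\in\mbox{Dom}(\delta)$ immediately, with no approximation step. Your last paragraph essentially acknowledges this, which makes the preceding density argument redundant rather than load-bearing. If instead one wanted to \emph{construct} $\delta$ starting from the explicit formula on elementary processes and then extend, your outline is the right one, but that is a different (and longer) story than what the paper invokes.
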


\subsection{Basic facts of theory on Sobolev spaces}

In this section we concisely review some basic facts about theory on Sobolev spaces.

Let $U$ be an open bounded subset of $\R^d$. Fix $1\leq p \leq \infty$ and let $k\geq 0$ an integer. The Sobolev space $W^{k,p}(U)$ is composed by all locally integrable functions $u: U \rightarrow \R$ such that for any multiindex $\alpha$ with $|\alpha|\leq k$, then $D^\alpha u$ exists in the weak sense and belongs to $L^p(U)$.

We endow the space $W^{k,p}(U)$ with the topology generated by the norm
$$\|u\|_{W^{k,p}(U)} := \left(\sum_{|\alpha|\leq k} \int_U |D^\alpha u|^p dx \right)^{1/p}, \ \ 1\leq p <\infty$$
or
$$\|u\|_{W^{k,\infty}(U)}:= \sum_{|\alpha|\leq k} \esssup_{U} |D^\alpha u|, \ \ p=\infty. $$

The following relations will be of high relevance for our purposes. For $1\leq p < q \leq \infty$, $k>l$ such that $(k-l)p<d$ and
$$\frac{1}{q} = \frac{1}{p}- \frac{k-l}{d}$$
then we have the following continuous embedding
\begin{align}
W^{k,p}(\R^d) \hookrightarrow W^{l,q}(\R^d).
\end{align}

Also, we have the following embedding as a consequence of Morrey's inequality; if $\frac{k-r-\alpha}{d}=\frac{1}{p}$ with $\alpha\in (0,1)$ then
\begin{align}\label{embed}
W^{k,p}(\R^d) \hookrightarrow C^{r,\alpha}(\R^d).
\end{align}
Essentially, this means that if we have enough Sobolev-regularity then we may expect some continuous classical derivatives up to some order.

We will though use $\frac{\partial}{\partial x}$ to denote differentiation in both the weak and classical sense when the context is clear.

\subsection{Shuffles}\label{shuffles}
Let $m,n\in \mathbb{N}_0$ and denote by $S_{m} =\{\sigma: \{1,\dots, m\}\rightarrow \{1,\dots,m\} \}$ the set of permutations of $m$ elements. Define the set of \emph{shuffle permutations} of length $n+m$ as
$$S(m,n) := \{\sigma\in S_{m+n}: \, \sigma(1)<\cdots <\sigma(m), \, \sigma(m+1)<\cdots <\sigma(m+n)\}.$$

Fix $s,t\in [0,T]$ with $s <t$ and define the $m$-dimensional subset of $[0,1]^m$
$$\Lambda_{s,t}^m:=\{(u_1,\dots,u_m)\in [0,T]^m : \, s<u_1<\cdots < u_m<t\}.$$

Denote $u=(u_1,\dots,u_m)\in [0,T]^m$ and for given a permutation $\sigma \in S_m$ let $u_{\sigma(1:m)} := (u_{\sigma(1)},\dots, u_{\sigma(m)})$.

Let $f:[0,T]^m \rightarrow \R$ and $g:[0,T]^n\rightarrow \R$ be two measurable functions. Then
\begin{align}\label{shuffle}
\left(\int_{\Lambda_{s,t}^m} f(u)du \right)\left( \int_{\Lambda_{s,t}^n} g(u)du \right)= \sum_{\sigma\in S(m,n)} \int_{\Lambda_{s,t}^{m+n}} f(u_{\sigma(1:m)})g(u_{\sigma(m+1:m+n)})du.
\end{align}

We will also need the following formula. Let $f_i:[0,T] \rightarrow \R$, $i=1,\dots,m+n$ be measurable functions and $k=0,\dots, m$ be fixed. Then 
\begin{align}\label{shuffle2}
\begin{split}
\int_{\Lambda_{s,t}^m} \int_{\Lambda_{s,u_k}^n} \prod_{i=1}^n f_i(v_i) \prod_{i=n+1}^{n+m} f_i(u_i) dvdu = \sum_{\sigma\in S_k(m,n)} \int_{\Lambda_{s,t}^{m+n}} \prod_{i=1}^{m+n} f_i(w_{\sigma(i)}) dw
\end{split}
\end{align}
where
$$S_k (m,n) = \{\sigma \in S(m,n): \, \sigma(j)=j, \, j=k,\dots,m\}.$$
If we consider the case $u_k=t$ in the last formula we indeed obtain \eqref{shuffle}. Observe also that the number of terms in the sum given in \eqref{shuffle2} is at most of order $C^{m+n}$ for some finite constant $C>0$.


\section{Malliavin and flow regularity of strong solutions of SDEs}\label{main results}

Consider the \emph{stochastic differential equation} (SDE) given by
\begin{align}\label{SDE}
\begin{cases}
dX_t = b(t,X_t) dt + dB_t,\\
X_0=x \in \R^d,
\end{cases}
\end{align}
where the drift coefficient $b:[0,T]\times \R^d \rightarrow \R^d$ is a Borel measurable function and $B_t$ is a \mbox{$d$-dimensional} Brownian motion defined on the filtered probability space $(\Omega, \mathcal{F}, \{\mathcal{F}_t\}_{t\in[0,T]}, P)$ where the filtration $\{\mathcal{F}_t\}_{t\in [0,T]}$ is the one generated by $B_t$, $t\in [0,T]$ augmented by the $P$-null sets.

If $b$ is of linear growth and Lipschitz continuous it is well-known that there exists a unique global strong solution to the SDE (\ref{SDE}) which belongs to $\mathbb{D}^{1,2}(\Omega)$. In fact, under more relaxed conditions on $b$ one has the same result, see for instance \cite{Men11}, \cite{Nil13}.

In this section we are concerned with the regularity of the solution in the Malliavin sense in terms of the regularity of $b$. We will assume the following hypotheses for $b$, for every $(t,x) \in [0,T]\times \R^d$
\begin{align}\label{bcondition}
\begin{split}
\sup_{t\in [0,T]}|b(t,x)|\leq C(1+|x|), C>0,\\
D b(t,\cdot), D^2 b(t,\cdot),\dots, D^k b(t,\cdot)\in L^{\infty}(\R^d)
\end{split}\tag{H}
\end{align}
for some $k\geq 1$ where here, the derivatives are understood in the weak sense. In particular, $b$ is $k-1$ times continuously differentiable in virtue of the Sobolev embedding (\ref{embed}) and equation (\ref{SDE}) admits a unique strong solution.

Before we proceed to the main statements of this section we need two preliminary results which are essential for our targets.

\begin{lemm}\label{epsilonbound}
Let $\{b_n\}_{n\geq 0}$ be a sequence of compactly supported smooth functions approximating $b$ a.e. in $t \in [0,T]$ and $x\in \R^d$ such that $\sup_{n\geq 0}|b_n(t,x)| \leq C(1+|x|)$, all $x\in \R^d$ and $t\in [0,T]$. Then for any compact subset $K\subset \R^d$ there exists an $\varepsilon>0$ such that
\begin{align}\label{epsilonboundn}
\sup_{x\in K} \sup_{n\geq 0}E\left[ \mathcal{E}\left( \int_0^T b_n(u,B_u^x)dB_u\right)^{1+\varepsilon} \right] <\infty.
\end{align}
where $B_t^x := x+B_t$ and
$$\mathcal{E}(M_t):= \exp \left\{M_t- M_0 - \frac{1}{2}[M]_t\right\}$$
denotes the Dol\'{e}ans-Dade exponential of a martingale $M$ and $[M]$ the quadratic variation of $M$. In particular we also have
\begin{align}\label{epsilonbound}
\sup_{x\in K} E\left[ \mathcal{E}\left( \int_0^T b(u,B_u^x)dB_u\right)^{1+\varepsilon} \right] <\infty.
\end{align}
\end{lemm}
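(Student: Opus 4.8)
The plan is to first establish the uniform bound \eqref{epsilonboundn} for the approximating sequence and then deduce \eqref{epsilonbound} by a limiting argument. Fix $\lambda=1+\varepsilon$ and write $M^n_t=\int_0^t b_n(u,B^x_u)\,dB_u$, so that the quantity to control is $E[\mathcal{E}(M^n)_T^{\lambda}]$. The first step is the algebraic identity obtained by completing the square,
$$\mathcal{E}(M^n)_T^{\lambda}=\mathcal{E}(\lambda M^n)_T\,\exp\Big\{\tfrac{\lambda(\lambda-1)}{2}\,[M^n]_T\Big\},\qquad [M^n]_T=\int_0^T|b_n(u,B^x_u)|^2\,du.$$
Since each $b_n$ is bounded, $[M^n]_T$ is a bounded random variable, so Novikov's criterion makes $\mathcal{E}(\lambda M^n)$ a genuine $P$-martingale of expectation one. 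This lets me introduce a probability measure $Q_n$ (depending also on $x$) by $dQ_n/dP=\mathcal{E}(\lambda M^n)_T$ and rewrite
$$E\big[\mathcal{E}(M^n)_T^{\lambda}\big]=E_{Q_n}\Big[\exp\Big\{\tfrac{\lambda(\lambda-1)}{2}\int_0^T|b_n(u,B^x_u)|^2\,du\Big\}\Big].$$

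The decisive feature is that the coefficient $\tfrac{\lambda(\lambda-1)}{2}=\tfrac{(1+\varepsilon)\varepsilon}{2}$ is of order $\varepsilon$, hence can be made arbitrarily small. By Girsanov's theorem, under $Q_n$ the process $\tilde B_t:=B_t-\lambda\int_0^t b_n(u,B^x_u)\,du$ is a standard Brownian motion, and $Y_u:=B^x_u$ solves $dY_u=\lambda b_n(u,Y_u)\,du+d\tilde B_u$ with $Y_0=x$. Using the uniform linear growth $|b_n(u,y)|\le C(1+|y|)$ together with Gronwall's inequality I obtain the pathwise bound
$$\sup_{u\le T}|Y_u|\le e^{\lambda C T}\big(|x|+\lambda C T+\sup_{u\le T}|\tilde B_u|\big),$$
which is uniform in $n$. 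Consequently $\int_0^T|b_n(u,Y_u)|^2\,du\le A\,(1+|x|^2+\sup_{u\le T}|\tilde B_u|^2)$ for a constant $A=A(C,T,\lambda)$ independent of $n$ and of $x$, and the $x$-dependence factors out of the exponential.

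It then remains to bound $E_{Q_n}\big[\exp\{\gamma\,\sup_{u\le T}|\tilde B_u|^2\}\big]$ with $\gamma=\tfrac{\lambda(\lambda-1)}{2}A$. The key observation is that, under $Q_n$, the law of $\tilde B$ is exactly Wiener measure for \emph{every} $n$ and $x$, so this equals a single integral $E[\exp\{\gamma\sup_{u\le T}|B_u|^2\}]$ against Wiener measure; by Fernique's theorem there is a threshold $\gamma_0=\gamma_0(T,d)>0$ below which it is finite. Since $\gamma\to0$ as $\varepsilon\to0$ while $A$ stays bounded, I fix $\varepsilon>0$ small enough that $\gamma<\gamma_0$, obtaining $\sup_{x\in K}\sup_n E[\mathcal{E}(M^n)_T^{\lambda}]<\infty$, i.e. \eqref{epsilonboundn}. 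For \eqref{epsilonbound} I pass to the limit: as $b_n\to b$ a.e. and the occupation measure of $(u,B^x_u)$ is absolutely continuous, one has $b_n(u,B^x_u)\to b(u,B^x_u)$ for a.e. $(u,\omega)$; the uniform growth and the finiteness of all Gaussian moments of $\sup_u|B^x_u|$ give, via It\^o's isometry and dominated convergence, convergence of $\int_0^T b_n(u,B^x_u)\,dB_u$ and of $[M^n]_T$, hence $\mathcal{E}(M^n)_T\to\mathcal{E}\big(\int_0^\cdot b(u,B^x_u)\,dB_u\big)_T$ in probability along a subsequence, and Fatou's lemma transfers the bound.

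The main obstacle — and the step to be handled with care — is the uniform control of the exponential moment of $[M^n]_T$. A naive H\"older split of $\mathcal{E}(M^n)_T^{\lambda}$ that merely discards the martingale factor through its expectation forces a fixed coefficient of order $\tfrac12$ in front of $[M^n]_T$ and then collides with the Cameron--Martin integrability threshold for $\int_0^T|B_u|^2\,du$, imposing a spurious smallness condition on $CT$. The change of measure is precisely what replaces that fixed coefficient by one of order $\varepsilon$, so the Fernique threshold can always be met by shrinking $\varepsilon$; carrying this out while keeping every constant independent of $n$ and $x$ is where the real work lies.
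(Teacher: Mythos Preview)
Your proof is correct and follows essentially the same route as the paper: the algebraic identity $\mathcal{E}(M)^{\lambda}=\mathcal{E}(\lambda M)\exp\{\tfrac{\lambda(\lambda-1)}{2}[M]\}$, the Girsanov change of measure that turns $B^x$ into a solution of $dY=\lambda b_n(u,Y)\,du+d\tilde B$, the Gronwall bound on $|Y_u|$ from the uniform linear growth, and the finiteness of an exponential moment of Brownian motion once the coefficient is of order $\varepsilon$. The paper handles the last step by directly squaring out $(1+|x|+|B_u|)^2$ and choosing $\varepsilon$ small, whereas you invoke Fernique's theorem for $\sup_u|\tilde B_u|^2$; these are equivalent. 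You also supply the Fatou/subsequence argument for passing from \eqref{epsilonboundn} to \eqref{epsilonbound}, which the paper leaves implicit.
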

\begin{proof}
Indeed, write
\begin{align*}
E&\left[\mathcal{E}\left( \int_0^T b_n(u,B_u^x)dB_u \right)^{1+\varepsilon} \right] =\\
&= E\left[\exp \left\{\int_0^T (1+\varepsilon)b_n(u,B_u^x)dB_u - \frac{1}{2}\int_0^T (1+\varepsilon)|b_n(u,B_u^x)|^2du \right\}\right]\\
&= E\Bigg[\exp \Bigg\{\int_0^T (1+\varepsilon)b_n(u,B_u^x)dB_u - \frac{1}{2}\int_0^T (1+\varepsilon)^2|b_n(u,B_u^x)|^2du\\
&+\frac{1}{2}\int_0^T \varepsilon(1+\varepsilon) |b_n(u,B_u^x)|^2du \Bigg\}\Bigg]\\
&= E\left[\exp \left\{\frac{1}{2}\int_0^T  \varepsilon(1+\varepsilon) |b_n(u,X_u^{\varepsilon,x})|^2du \right\}\right]\\
\end{align*}
where the last step follows from Girsanov's theorem and here $X_t^{\varepsilon,x}$ is a solution of the following SDE
$$
\begin{cases}
dX_t^{\varepsilon,x} = (1+\varepsilon)b_n(t,X_t^{\varepsilon,x}) dt + dB_t, \ \ t\in [0,T] \\
X_0^{\varepsilon,x} = x.
\end{cases}
$$
Observe that, since $b$ has at most linear growth, we have
$$
|X_t^{\varepsilon,x}| \leq |x| + C(1+\varepsilon)\int_0^t(1+|X_u^{\varepsilon,x}|)du + |B_t|
$$
for every $t\in [0,T]$. Then Gronwall's inequality gives
\begin{align}\label{gronw}
|X_t^{\varepsilon, x}| \leq \left( |x| + C(1+\varepsilon)T +|B_t|\right)e^{C(1+\varepsilon)T},
\end{align}
and the sublinearity of $b$ and the estimate (\ref{gronw}) give
$$|b_n(u,X_u^{\varepsilon,x})| \leq C_{\varepsilon,T}\left(1+|x| + |B_t|\right)$$
where $C_{\varepsilon,T}$ denotes the collection of all constants depending on $\varepsilon,T$.

As a result,
\begin{align*}
E\Big[ \exp\bigg\{\varepsilon(1+&\varepsilon) \int_0^T |b_n(u,X_u^{\varepsilon,x})|^2 du\bigg\}\Big] \leq E\left[\exp\left\{\tilde{C}_{\varepsilon,T} \int_0^T \left( 1+|x|+|B_u|\right)^2 du\right\}\right]\\
&\leq e^{\tilde{C}_{\varepsilon,T} (1+|x|)^2}E\left[\exp\left\{\tilde{C}_{\varepsilon,T}(1+|x|) \int_0^T (|B_u|+|B_u|^2) du\right\}\right]
\end{align*}
where $\tilde{C}_{\varepsilon,T}>0$ is a constant such that $\lim_{\varepsilon \searrow 0} \tilde{C}_{\varepsilon,T}=0$. Clearly, for every compact set $K\subset \R^d$ we can choose $\varepsilon>0$ small enough such that
$$\sup_{x\in K} \sup_{n\geq 0} E\Big[ \exp\bigg\{\varepsilon(1+\varepsilon) \int_0^T |b_n(u,X_u^{\varepsilon,x})|^2 du\bigg\}\Big] <\infty.$$
\end{proof}

\begin{rem}\label{remdensity}
We point out that the finite dimensional laws of the strong solution of (\ref{SDE}) are absolutely continuous with respect to the Lebesgue measure. To see this, let $A$ denote a set with null Lebesgue measure. Then since $b$ is of, at most, linear growth, by Girsanov's theorem, see e.g. \cite[Proposition 5.3.6]{Kar98}, and Lemma \ref{epsilonbound} one has
\begin{align*}
P(X_t \in A) &\leq E\left[\textbf{1}_{\{B_t^x \in A\}} \mathcal{E} \left( \int_0^T b (u,B_u^x)dB_u\right)\right]\\
&\leq C_{\varepsilon} P(B_t^x \in A)^{\frac{\varepsilon}{1+\varepsilon}}\\
&=0.
\end{align*}
\end{rem}

Next, we give a crucial estimate for the proof of our main results.

\begin{prop} \label{mainEstimate}
Let $B$ be a $d$-dimensional Brownian motion starting from $z_0\in \R^d$ and $b_1, \dots , b_m$ be compactly supported continuously differentiable functions $b_i : [0,T] \times \mathbb{R}^d \rightarrow \mathbb{R}$ for $i=1,2, \dots m$. Let $\alpha_i\in \{0,1\}^d$ be a multiindex such that $| \alpha_i| \leq 1$ for $i = 1,2, \dots, m$. Then there exists a universal constant $C$ (independent of $\{ b_i \}_i$, $m$, and $\{ \alpha_i \}_i$) such that
\begin{equation}
 \label{estimate}
\left| E \left[ \int_{t_0 < t_1 < \dots < t_m < t} \left( \prod_{i=1}^m D^{\alpha_i}b_i(t_i,B_{t_i})  \right) dt_1 \dots dt_m \right] \right| \leq \frac{C^m \prod_{i=1}^m \|b_i \|_{\infty} (t-t_0)^{m/2}}{\Gamma(\frac{m}{2} + 1)}
\end{equation}
for every $t_0,t\in [0,T]$ where $\Gamma$ is the Gamma-function. Here, $\alpha_i=(0,\dots, 1,\dots, 0)$ is a multiindex where $1$ is placed in position $1\leq  j\leq d$ and thus $D^{\alpha_i}$ denotes the partial derivative with respect to the $j'$th space variable.
\end{prop}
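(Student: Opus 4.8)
The plan is to pass from the probabilistic statement to a purely analytic estimate on iterated Gaussian convolutions over the time simplex, and then to extract the factor $\Gamma(\tfrac{m}{2}+1)^{-1}$ from a Dirichlet--Beta integral. First I would use Fubini together with the Markov property of $B$ to write, with $s_i=t_i-t_{i-1}$, $x_0=z_0$, and $p_s(x,y)=(2\pi s)^{-d/2}\exp(-|y-x|^2/2s)$ the Gaussian kernel,
\begin{equation*}
E\Big[\int_{t_0<t_1<\dots<t_m<t}\prod_{i=1}^m D^{\alpha_i}b_i(t_i,B_{t_i})\,dt_1\cdots dt_m\Big]=\int_{\Lambda_{t_0,t}^m}\int_{(\R^d)^m}\prod_{i=1}^m D^{\alpha_i}b_i(t_i,x_i)\prod_{i=1}^m p_{s_i}(x_{i-1},x_i)\,dx\,dt.
\end{equation*}
This reduces the claim to a deterministic bound on the inner spatial integral, integrated over $\Lambda_{t_0,t}^m$.

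Next I would remove each spatial derivative from the merely bounded, non-smooth factor $b_i$ by integrating by parts in $x_i$; the compact support of $b_i$ kills the boundary terms, and the derivative is transferred onto the smooth Gaussian kernels via $\partial_{x}p_s(x,y)=-\partial_y p_s(x,y)$ and $\partial_y^\alpha p_s=(\text{Hermite polynomial})\cdot p_s$. The heuristic gain is that each transferred derivative produces a factor of size $s_i^{-1/2}$, since $\int_{\R^d}|\partial_y p_s(x,y)|\,dy=C\,s^{-1/2}$; keeping only these factors leads to the time integral $\int_{\Lambda_{t_0,t}^m}\prod_i s_i^{-|\alpha_i|/2}\,dt$. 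This is a Dirichlet integral: with $\gamma_i=1-|\alpha_i|/2$ it equals $(t-t_0)^{\sum\gamma_i}\prod_i\Gamma(\gamma_i)/\Gamma(1+\sum_i\gamma_i)$, which in the extremal case $|\alpha_i|\equiv 1$ is exactly $\pi^{m/2}(t-t_0)^{m/2}/\Gamma(\tfrac m2+1)$, the desired denominator. Fewer derivatives only raise the exponent $\sum\gamma_i\ge m/2$, and using $t-t_0\le T$ together with monotonicity of $\Gamma$ these cases are absorbed into the constant $C^m$.

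I expect the main obstacle to be that this per-time-slice picture is too naive: integration by parts also creates terms in which two neighbouring derivatives land on the same kernel (a second derivative $\partial^2 p_{s_i}$, of size $s_i^{-1}$), and $\int_{\Lambda_{t_0,t}^m}s_i^{-1}\,dt$ diverges, so such terms cannot be estimated slice by slice in time. Instead I would integrate the offending time variable first and exploit the heat identity $\partial_s p_s=\tfrac12\Delta p_s$, so that $\int_0^{S}\partial_s p_s\,ds=p_S-\delta$ converts a second spatial derivative into a bounded contribution of the correct order $S^{0}$ rather than a singular $s^{-1}$; more generally $\partial_x^{j}\int_0^S P_s b\,ds$ (with $P_s$ the heat semigroup) obeys $\|\cdot\|_\infty\le C\|b\|_\infty S^{1-j/2}$ with no singularity. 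Carrying this out cleanly suggests an induction on $m$ in which one integrates out the innermost pair $(x_m,t_m)$, records the resulting function of $x_{m-1}$ together with its controlled derivatives, and reduces to the order-$(m-1)$ problem.

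Finally, the bookkeeping of which derivatives are pushed outward to the previous variable versus consumed downstream generates nested time integrals over sub-simplices, and I would use the shuffle formulas \eqref{shuffle}--\eqref{shuffle2} to reorder and recombine these into single integrals over $\Lambda_{t_0,t}^{\bullet}$; the observation that \eqref{shuffle2} has at most $C^{m}$ summands is precisely what keeps the universal constant of the form $C^m$. The hardest part of the bookkeeping is controlling the accumulation of derivative order along chains of downstream terms so that every apparent negative power of a time increment is balanced by a time integration and the whole sum collapses to the Beta integral above. Assembling the half-powers across the $m$ levels then yields \eqref{estimate} with a constant $C$ independent of $\{b_i\}_i$, $m$, and $\{\alpha_i\}_i$.
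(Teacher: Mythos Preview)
The paper does not actually prove this proposition: its entire proof is a reference to \cite[Proposition 3.7]{Men11} for the case $|\alpha_i|=1$ for all $i$, together with the remark that the case $|\alpha_i|\le 1$ is ``fairly similar''. So there is no in-paper argument to compare your sketch against; what you have written is essentially an outline of how the cited result is established.

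Your outline is on the right track and matches the structure of the argument in \cite{Men11}: rewrite the expectation via the Markov property as an iterated integral against Gaussian transition kernels, transfer each $D^{\alpha_i}$ from $b_i$ onto the adjacent kernels by integration by parts (using compact support of $b_i$), and reduce the time integral over the simplex to a Dirichlet/Beta integral that produces exactly the factor $(t-t_0)^{m/2}/\Gamma(\tfrac m2+1)$. You have also correctly flagged the genuine technical point, namely that a naive slice-by-slice estimate fails because two transferred derivatives can land on the same kernel and produce a non-integrable $s_i^{-1}$. Your proposed cure---integrate the offending time variable first and use the heat identity $\partial_s p_s=\tfrac12\Delta p_s$ to trade a second spatial derivative for a time derivative, then run an induction on $m$ integrating out the innermost $(x_m,t_m)$---is precisely the mechanism that makes the argument close; the shuffle bookkeeping you invoke is what controls the combinatorial growth to $C^m$.

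The only point that deserves more care in your write-up is that the second derivatives arising are mixed partials $\partial_j\partial_k p_s$ with $j\neq k$, not full Laplacians, so the heat identity alone does not immediately apply; in the actual proof this is handled by systematically using $\partial_{x}p_s(x,y)=-\partial_{y}p_s(x,y)$ to push every derivative in a fixed direction along the chain before estimating, so that each kernel ends up carrying at most one derivative. Once you make that choice explicit, the induction goes through without the divergent terms ever appearing, and your Dirichlet-integral computation gives the stated bound.
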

\begin{proof}
See \cite[Proposition 3.7]{Men11} for a detailed proof when $|\alpha_i|=1$ for all $i=1,\dots,m$. For the case, $|\alpha_i|\leq 1$ the proof is fairly similar.
\end{proof}

We turn now to one of the main results of this section.

\begin{thm}\label{mainprop}
Let $X_t$, $t\in[0,T]$ denote the solution to equation (\ref{SDE}) with \mbox{$b:[0,T]\times \R^d \rightarrow \R^d$} a function satisfying hypotheses $(H)_1$, i.e. of linear growth with bounded weak derivative, then we have $X_t \in \mathbb{D}^{2,p}(\Omega)$ for all $p\geq 1$. In particular the result holds if $b$ is (globally) Lipschitz.
\end{thm}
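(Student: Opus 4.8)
The plan is to prove the statement for a smooth approximation of $b$ and then pass to the limit using the closedness of the iterated Malliavin derivative. Let $\{b_n\}$ be compactly supported smooth functions approximating $b$ almost everywhere with $\sup_n|b_n(t,x)|\le C(1+|x|)$ and $\sup_n\|Db_n(t,\cdot)\|_\infty\le\|Db\|_\infty$, and let $X^n$ be the associated smooth strong solutions, so that $X^n_t\in\mathbb{D}^{2,p}$ for every $p$. By the Cameron--Martin/Girsanov identity together with Lemma \ref{epsilonbound} one has $X^n_t\to X_t$ in $L^p(\Omega)$ for every $p$ (the same computation as in Remark \ref{remdensity}). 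Hence, by a standard closedness argument (e.g.\ \cite[Lemma 1.2.3]{Nua10}), it suffices to produce bounds on the first and second Malliavin derivatives of $X^n_t$ that are uniform in $n$; weak compactness in $L^p$, $p>1$, then forces $X_t\in\mathbb{D}^{2,p}$, and the case $p=1$ follows from the embedding $\mathbb{D}^{2,2}\hookrightarrow\mathbb{D}^{2,1}$.

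The first derivative is immediate. Differentiating \eqref{SDE} (for $b_n$) in the Malliavin sense, $D_sX^n_t$ solves, for $s\le t$, the linear equation
\[
D_sX^n_t=I_d+\int_s^t Db_n(u,X^n_u)\,D_sX^n_u\,du ,
\]
and vanishes for $s>t$. Since the Jacobian $Db_n$ is bounded by $\|Db\|_\infty$ uniformly in $n$, Gronwall's inequality yields the deterministic bound $\|D_sX^n_t\|\le e^{\|Db\|_\infty(t-s)}$, whence $\sup_nE[\|D_sX^n_t\|^p]<\infty$ for all $p$.

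The main obstacle is the second derivative, where Gronwall fails. Differentiating once more, $D_rD_sX^n_t$ solves a linear equation whose source term carries the \emph{second} spatial derivative of $b_n$; by the variation-of-constants formula,
\[
D_rD_sX^n_t=\int_{r\vee s}^t G^n_{u,t}\,D^2b_n(u,X^n_u)\big[D_rX^n_u,\,D_sX^n_u\big]\,du ,
\]
where $G^n_{u,t}$ is the fundamental solution of the homogeneous equation. Under hypothesis $(H)_1$ the sup-norm of $D^2b_n$ is \emph{not} controlled, so the integrand cannot be bounded pathwise and the smoothing must come from the expectation. My plan is to expand $G^n$, $D_rX^n$ and $D_sX^n$ into their Peano--Baker/Neumann series, so that each resulting term is an iterated integral over an ordered simplex in which exactly one factor is the Hessian $D^2b_n$ while all the remaining factors are copies of the bounded Jacobian $Db_n$. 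The several nested time-integrals (below and above the distinguished time $u$) are recombined into a single simplex by the shuffle identity \eqref{shuffle2}, at the cost of at most $C^{M}$ terms of total length $M$.

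The crucial point is that Proposition \ref{mainEstimate} bounds the expectation of such an iterated integral in terms of the sup-norms of the \emph{undifferentiated} functions while allowing one derivative per factor. Accordingly I would read the single Hessian factor as $D^2b_n=D^{\alpha}(\partial_j b_n)$ with $|\alpha|=1$ acting on the bounded function $\partial_j b_n$, and every other factor as a zeroth-order term $D^0(\partial_\ell b_n)$; all multiindices then satisfy $|\alpha_i|\le 1$ and every function has sup-norm $\le\|Db\|_\infty$. After the Cameron--Martin/Girsanov identity rewrites the $X^n$-expectation as a Wiener expectation weighted by the Dol\'eans--Dade density, whose moments are controlled by Lemma \ref{epsilonbound} and split off by H\"older's inequality, Proposition \ref{mainEstimate} applies under the Wiener measure to each term and produces a bound $C^M\|Db\|_\infty^M(t-t_0)^{M/2}/\Gamma(\tfrac M2+1)$, uniform in $n$. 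To reach the $L^p$-norm I would raise $D_rD_sX^n_t$ to an integer power $p$ and linearize the product of the $p$ series by the shuffle identity \eqref{shuffle} before estimating; the factorial decay $1/\Gamma(\tfrac M2+1)$ then makes the resulting multiple series summable, giving $\sup_nE[\|D_rD_sX^n_t\|^p]<\infty$ for every $p$. Together with the first-derivative bound and the closedness argument, this establishes $X_t\in\mathbb{D}^{2,p}$ for all $p\ge1$; the globally Lipschitz case is the instance $k=1$ of $(H)$, for which $(H)_1$ holds automatically. The delicate point throughout --- and the reason the hypothesis is exactly boundedness of the first derivative --- is that the second spatial derivative can be absorbed only because Proposition \ref{mainEstimate} charges it to $\|Db\|_\infty$ rather than to $\|D^2b_n\|_\infty$.
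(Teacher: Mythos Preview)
Your proposal is correct and follows essentially the same route as the paper: smooth approximation, Neumann/Picard series for the first and second Malliavin derivatives, shuffle identities \eqref{shuffle}--\eqref{shuffle2} to recombine the nested simplices, Girsanov plus Lemma \ref{epsilonbound} to pass to Wiener expectations, and then Proposition \ref{mainEstimate} applied with the single Hessian factor read as $D^{\alpha}(\partial_j b_n)$, $|\alpha|=1$, so that only $\|Db\|_\infty$ enters the bound and the factorial decay gives summability. The only cosmetic differences are that the paper proves $X^n_t\to X_t$ in $L^p$ via Gronwall and the density $p_{X_t}$ rather than by a Girsanov argument, invokes \cite[Proposition 1.5.5]{Nua10} rather than Lemma 1.2.3 for the limiting step, and reaches the $L^p$-norm by choosing an exponent $2^q$ and squaring repeatedly with \eqref{shuffle} instead of taking a generic integer power.
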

\begin{proof}
In order to carry out the proof of Theorem \ref{mainprop}, we use the following result in \mbox{\cite[Proposition 1.5.5.]{Nua10}}.
\begin{prop}
Let $\{X_n\}_{n\geq 0}$ a sequence of random variables such that $X_n \rightarrow X$ in $L^p(\Omega)$, $p\geq 1$ and such that for $k\geq 1$
$$\sup_{n\geq 0}\|X_n\|_{\mathbb{D}^{k,p}(\Omega)}<\infty,$$
then $X\in \mathbb{D}^{k,p}(\Omega)$.
\end{prop}

We start with the proof of Theorem \ref{mainprop} by showing that the solution $X_t$ of (\ref{SDE}) can be approximated by random variables in $L^p(\Omega)$ for every $t\in[0,T]$.

We have that the weak derivative of $b$ lies in $L^{\infty}(\R^d)$ and $b$ has linear growth, i.e. there is $C>0$ such that
$$|b(t,x)|\leq C(1+|x|)$$
for every $t\in [0,T]$ and $x\in \R^d$. Then we can approximate $b$ a.e. in $t\in [0,T]$ and $x\in \R^d$ by a sequence of functions $\{b_n\}_{n\geq 1} \subset \mathcal{C}^{2}(\R^d)$ such that \mbox{$\sup_{n\geq 0} |b_n(t,x)| \leq C(1+|x|)$} and \mbox{$\sup_n \|b_n'\|_{\infty}<\infty$}. For each $t\in[0,T]$, denote by $X_t^n$ the sequence of random variables in $L^p(\Omega)$ solution to equation (\ref{SDE}) with drift coefficient $b_n$. Then

$$X_t^n = x + \int_0^t b_n(u,X_u^n)du + B_t.$$ 

Denote by $p_{X_t}$ the density of $X_t$ for a fixed $t\in [0,T]$ from Remark \ref{remdensity}. Denote by $|\cdot|$ the Euclidean norm in $\R^d$, then
\begin{align*}
E\big[|X_t^n &- X_t|^p\big] = E\left[\bigg| \int_0^t \left( b_n(u,X_u^n) - b(u,X_u)\right) du \bigg|^p\right]\\
&\leq (2t)^{p-1} E\left[\int_0^t |b_n(u,X_u)-b(u,X_u)|^p du\right] + (2t)^{p-1} E\left[\int_0^t |b_n(u,X_u^n)- b_n(u,X_u)|^p du\right]\\
&\leq (2t)^{p-1} E\left[\int_0^t |b_n(u,X_u)-b(u,X_u)|^p du\right] + (2t)^{p-1} \|b_n'\|_{\infty}^p E\left[\int_0^t |X_u^n - X_u|^p du\right].
\end{align*}
Using Gronwall's inequality we obtain
\begin{align*}
E\left[|X_t^n - X_t|^p\right] &\leq (2t)^{p-1} \exp\left\{ (2t)^{p-1}t \sup_{k} \|b_k'\|_{\infty}^p\right\} E\left[\int_0^t |b_n(u,X_u)-b(u,X_u)|^p du\right] \\
&\leq C \int_0^t \int_{\R^d} |b_n(u,x)-b(u,x)|^p p_{X_u}(x) dx du
\end{align*}
for a constant $C>0$ independent of $n$. Then Lebesgue's dominated convergence theorem gives the $L^p(\Omega)$-convergence.

Let us now proceed with the proof that the random variables $X_t^n$ are bounded in $\mathbb{D}^{2,p}(\Omega)$ for every $p\geq 1$: Fix $s_1,t \in [0,T]$, $s_1,\leq t$. Then
\begin{align}\label{malliavin1}
D_{s_1} X_t^n = \mathcal{I}_d + \int_{s_1}^t b_n'(u,X_u^n)D_{s_1}X_u^ndu.
\end{align}
The above equations for $D_{s_1}X_u^n$, $n\geq 1$, are linear equations with matrix-valued unknowns. Since each $b_n$ is smooth we have a unique solution of (\ref{malliavin1}). Again, for notational convenience we denote by \mbox{$\Lambda_m(s,t):= \{(u_1,\dots,u_m)\in [0,T]^m: \ s<u_1<\cdots < u_m <t\}$} the $m$. Then using a Picard iteration argument we may write the solution of (\ref{malliavin1}) as a series expansion as follows
\begin{align}\label{picard}
D_{s_1} X_t^n = \mathcal{I}_d + \sum_{m\geq 1} \int_{\Lambda_m(s_1,t)} b_n'(u_1,X_{u_1}^n)\cdots b_n'(u_m ,X_{u_m}^n) du_1\cdots du_m.
\end{align}
To see that the above expression is indeed the solution of (\ref{malliavin1}) just make the following observation
$$\frac{d}{dt} D_{s_1} X_t^n = b_n'(t,X_t^n)\left(\mathcal{I}_d+ \sum_{m\geq 1} \int_{\Lambda_m(s_1,t)}b_n'(u_1,X_{u_1}^n)\cdots b_n '(u_{m}, X_{u_m}^n)du_1\cdots du_{m}\right).$$

Take now $s_2\in [0,t]$. Then
\begin{align}\label{malliavin21}
D_{s_2} D_{s_1} X_t^n =  \sum_{m\geq 1} \int_{\Lambda_m(s_1 \vee s_2,t)} D_{s_2}\left[b_n'(u_1,X_{u_1}^n)\cdots b_n'(u_m, X_{u_m}^n)\right] du_1\cdots du_m.
\end{align}
We expand the integrand of (\ref{malliavin21}) using Leibniz's rule as follows
$$ D_{s_2}\left[b_n'(u_1,X_{u_1}^n)\cdots b_n'(u_m,X_{u_m}^n)\right] = \sum_{r=1}^m b_n'(u_1,X_{u_1}^n)\cdots b_n''(u_r,X_{u_r}^n)D_{s_2}X_{u_r}^n\cdots b_n'(u_m,X_{u_m}^n).$$

\begin{rem}
We recall here that $b:[0,T]\times \R^d \rightarrow \R^d$ so $Db(t,\cdot): \R^d\rightarrow L(\R^d,\R^d)$ and so $Db(t,x)\in L(\R^d,\R^d)$. The second derivative is then $D^2 b(t,\cdot):\R^d \rightarrow L(\R^d,L(\R^d, \R^d))$ so $D^2b(t,x): L(\R^d, L(\R^d,\R^d)) \cong L^2(\R^d \times \R^d, \R^d)$ denoting by $L^2(\R^d \times \R^d, \R^d )$ the bilinear forms from $\R^d\times \R^d$ into $\R^d$.
\end{rem}

Inserting the representation (\ref{picard}) for $D_{s_2}X_{u_r}^n$ in this case we have that the above quantity can be written as
\begin{align*}
\sum_{r=1}^m & b_n'(u_1,X_{u_1}^n)\cdots b_n''(u_r,X_{u_r}^n)\\
&\times\left(\mathcal{I}_d+ \sum_{m\geq 1} \int_{\Lambda_m(s_2,u_r)} b_n'(v_1, X_{v_1}^n)\cdots b_n'(v_m, X_{v_m}^n) dv_1\cdots dv_m\right)b_n'(u_{r+1},X_{u_{r+1}}^n)\cdots b_n'(u_m, X_{u_m}^n).
\end{align*}
Altogether
\begin{align*}
D_{s_2} D_{s_1} X_t^n &=  \sum_{m_1\geq 1} \int_{\Lambda_{m_1}(s_1\vee s_2,t)} \sum_{r=1}^{m_1} b_n'(u_1, X_{u_1}^n)\cdots b_n''(u_r, X_{u_r}^n)\\
&\times \left(\mathcal{I}_d+ \sum_{m_2\geq 1} \int_{\Lambda_{m_2}(s_2,u_r)} b_n'(v_1, X_{v_1}^n)\cdots b_n'(v_{m_2} ,X_{v_{m_2}}^n) dv_1\cdots dv_{m_2}\right)\\
&\times b_n'(u_{r+1},X_{u_{r+1}}^n) \cdots b_n'(u_{m_1}, X_{u_{m_1}}^n) du_1\cdots du_{m_1}. \\
&= \sum_{m_1\geq 1} \int_{\Lambda_{m_1}(s_1\vee s_2,t)} \sum_{r=1}^{m_1} b_n'(u_1, X_{u_1}^n)\cdots b_n''(u_r, X_{u_r}^n) \cdots b_n'(u_{m_1}, X_{u_{m_1}}^n)du_1\cdots du_{m_1}  \\
&+ \sum_{m_1\geq 1} \int_{\Lambda_{m_1}(s_1\vee s_2,t)} \sum_{r=1}^{m_1} b_n'(u_1, X_{u_1}^n)\cdots  b_n''(u_r, X_{u_r}^n) \\
&\times  \left(\sum_{m_2\geq 1} \int_{\Lambda_{m_2}(s_2,u_r)} b_n'(v_1, X_{v_1}^n)\cdots b_n'(v_{m_2}, X_{v_{m_2}}^n) dv_1\cdots dv_{m_2}\right)\\
&\times b_n'(u_{r+1},X_{u_{r+1}}^n)\cdots b_n'(u_{m_1}, X_{u_{m_1}}^n) du_1\cdots du_{m_1}.
\end{align*}

We reallocate terms by dominated convergence and respecting the order of matrices
\begin{align}\label{malliavin22}
D_{s_2} D_{s_1} & X_t^n = \sum_{m_1\geq 1}\sum_{r=1}^{m_1} \int_{\Lambda_{m_1}(s_1\vee s_2,t)} b_n'(u_1, X_{u_1}^n)\cdots b_n''(u_r, X_{u_r}^n) \cdots b_n'(u_{m_1}, X_{u_{m_1}}^n)du_1\cdots du_{m_1}  \\
&+ \sum_{m_1\geq 1} \sum_{r=1}^{m_1}\sum_{m_2\geq 1} \int_{\Lambda_{m_1}(s_1\vee s_2,t)} \int_{\Lambda_{m_2}(s_2,u_r)} b_n'(u_1, X_{u_1}^n)\cdots  b_n''(u_r, X_{u_r}^n) \notag\\
&\times b_n'(v_1, X_{v_1}^n)\cdots b_n'(v_{m_2}, X_{v_{m_2}}^n) b_n'(u_{r+1},X_{u_{r+1}}^n)\cdots b_n'(u_{m_1}, X_{u_{m_1}}^n) dv_1\cdots dv_{m_2} du_1\cdots du_{m_1}.\notag\\ &=: I_1^n + I_2^n.\notag
\end{align}
Denote by $\|\cdot\|$ the maximum norm on $\R^{d\times d \times d}$. Then Minkowski's inequality gives
\begin{align*}
E\|D_{s_2}D_{s_1}X_t^n\|^p = E\| I_1^n + I_2^n \|^p\leq 2^{p-1}\left(E\|I_1^n\|^p + E\|I_2^n\|^p\right)
\end{align*}
Let $p\geq 1$ and choose $p_1,p_2\in [1,\infty)$ such that $pp_1=2^q$ for some integer $q$ and $\frac{1}{p_1}+\frac{1}{p_2}=1$. We focus now on the term $I_2^n$. Then by Girsanov's theorem we have

\begin{align}
E\|I_2^n\|^p &= E\Bigg[\bigg\| \sum_{m_1\geq 1} \sum_{r=1}^{m_1}\sum_{m_2\geq 1} \int_{\Lambda_{m_1}(s_1\vee s_2,t)} \int_{\Lambda_{m_2}(s_2,u_r)} b_n'(u_1, B_{u_1}^x)\cdots  b_n''(u_r, B_{u_r}^x) \notag\\
&\times b_n'(v_1, B_{v_1}^x)\cdots b_n'(v_{m_2}, B_{v_{m_2}}^x) b_n(u_{r+1}, B_{u_{r+1}}^x)\cdots b_n'(u_{m_1}, B_{u_{m_1}}^x) dv_1\cdots dv_{m_2} du_1\cdots du_{m_1}\bigg\|^p \notag \\
&\times \mathcal{E}\left( \sum_{i=1}^d \int_0^T b_n^{(i)}(u,B_u^x)dB_u^{(i)}\right)\Bigg] \notag.
\end{align}

Then choose $p_2=1+\varepsilon$ and $p_1=\frac{1+\varepsilon}{\varepsilon}$ with $\varepsilon>0$ sufficiently small and apply Lemma \ref{epsilonbound} to obtain 

\begin{align}\label{I2}
\begin{split}
E&\|I_2^n\|^p \leq C_{\varepsilon} \bigg\| \sum_{m_1\geq 1} \sum_{r=1}^{m_1}\sum_{m_2\geq 1} \int_{\Lambda_{m_1}(s_1\vee s_2,t)} \int_{\Lambda_{m_2}(s_2,u_r)} b_n'(u_1,B_{u_1}^x)\cdots  b_n''(u_r,B_{u_r}^x)b_n'(v_1, B_{v_1}^x) \\
&\times \cdots b_n'(v_{m_2}, B_{v_{m_2}}^x) b_n(u_{r+1},B_{u_{r+1}}^x)\cdots b_n'(u_{m_1}, B_{u_{m_1}}^x) dv_1\cdots dv_{m_2} du_1\cdots du_{m_1} \bigg\|_{L^{2^q}(\Omega; \R^{d\times d\times d})}^p 
\end{split}
\end{align}

Now we carry out the product of linear and bilinear forms in the integrand of (\ref{I2}). Recall that $b''(u,B_u^x)=\left(\frac{\partial^2}{\partial x_j \partial x_k} b^{(i)}(u,B_u^x)\right)_{i,j,k=1,\dots,d}$ and $b'(u,B_u^x)=\left(\frac{\partial}{\partial x_j} b^{(i)}(u,B_u^x) \right)_{i,j=1,\dots,d}$ where the superscript $b^{(i)}(u,B_u^x)$ here denotes the $i$-th component of the vector $b(u,B_u^x)$ and $\frac{\partial}{\partial x_j}$, resp. $\frac{\partial^2}{\partial x_j \partial x_k}$, denote the weak derivative of $b^{(i)}(u,B_u^x)$ with respect to the $j$-th space component, resp. with respect to the $j$-th and $k$-th space components. So we represent the second order derivatives as a matrix of matrices in this case, i.e. \mbox{$b''(t,x) = \nabla\otimes \nabla b(t,x)$} where $\otimes$ denotes the Kronecker tensor product.

Hence we can represent the second order derivatives in the integrand in (\ref{I2}) in this manner
\begin{align}\label{D2b}
b''(u,B_u^x)=\begin{pmatrix}
\frac{\partial}{\partial x_1}\begin{pmatrix} \frac{\partial}{\partial x_1} b^{(1)}(u,B_u^x) & \cdots & \frac{\partial}{\partial x_d} b^{(1)}(u,B_u^x) \\ \vdots & \ddots & \vdots \\ \frac{\partial}{\partial x_1} b^{(d)}(u,B_u^x) & \cdots & \frac{\partial}{\partial x_1} b^{(d)}(u,B_u^x)\end{pmatrix}\\
~\\
 \vdots\\
~\\
\frac{\partial}{\partial x_d}\begin{pmatrix} \frac{\partial}{\partial x_1} b^{(1)}(u,B_u^x) & \cdots & \frac{\partial}{\partial x_d} b^{(1)}(u,B_u^x) \\ \vdots & \ddots & \vdots \\ \frac{\partial}{\partial x_1} b^{(d)}(u,B_u^x) & \cdots & \frac{\partial}{\partial x_1} b^{(d)}(u,B_u^x)\end{pmatrix}\\
\end{pmatrix}
\end{align}

The product of $b''(B_{u})$ with $b'(B_v)$ is then
\begin{align}\label{D2bDb}
b''(u,B_u^x)b'(v,B_v^x)=\begin{pmatrix}
\frac{\partial}{\partial x_1}\begin{pmatrix} \frac{\partial}{\partial x_1} b^{(1)}(u,B_u^x) & \cdots & \frac{\partial}{\partial x_d} b^{(1)}(u,B_u^x) \\ \vdots & \ddots & \vdots \\ \frac{\partial}{\partial x_1} b^{(d)}(u,B_u^x) & \cdots & \frac{\partial}{\partial x_1} b^{(d)}(u,B_u^x)\end{pmatrix} b'(v,B_v^x)\\
~\\
 \vdots\\
~\\
\frac{\partial}{\partial x_d}\begin{pmatrix} \frac{\partial}{\partial x_1} b^{(1)}(u,B_u^x) & \cdots & \frac{\partial}{\partial x_d} b^{(1)}(u,B_u^x) \\ \vdots & \ddots & \vdots \\ \frac{\partial}{\partial x_1} b^{(d)}(u,B_u^x) & \cdots & \frac{\partial}{\partial x_1} b^{(d)}(u,B_u^x)\end{pmatrix} b'(v,B_v^x)\\
\end{pmatrix}
\end{align}
As a result
\begin{align*}
& &b''(u,B_u^x)b'(v,B_v^x)= \left( \sum_{l=1}^d \frac{\partial^2}{\partial x_k \partial x_l}b^{(i)}(u,B_u^x)\frac{\partial}{\partial x_j} b^{(l)}(v,B_v^x) \right)_{i,j,k=1}^d
\end{align*}
Hence, taking maximum norm over all products
\begin{align}\label{I2eq2}
\begin{split}
E\|I_2^n\|^p &\leq C_p \ \Bigg( \sum_{m_1\geq 1} \sum_{r=1}^{m_1}\sum_{m_2\geq 1}\sum_{i,j,k=1}^d \sum_{l_1,\dots,l_{m_1+m_2-1}=1}^d \bigg\| \int_{\Lambda_{m_1}(s_1\vee s_2,t)}\int_{\Lambda_{m_2}(s_2,u_r)} \frac{\partial}{\partial x_{l_1}}b_n^{(i)}(u_1,B_{u_1}^x) \\
&\times \frac{\partial}{\partial x_{l_2}}b_n^{(l_1)}(u_2,B_{u_2}^x)\cdots  \frac{\partial}{\partial x_{l_{r-1}}}b_n^{(l_{r-2})}(u_{r-1},B_{u_{r-1}}^x)\frac{\partial}{\partial x_k}\frac{\partial}{\partial x_{l_r}}b_n^{(l_{r-1})}(u_r,B_{u_r}^x) \\
&\times \frac{\partial}{\partial x_{l_{r+1}}}b_n^{(l_r)}(v_1,B_{v_1}^x)\cdots  \frac{\partial}{\partial x_{ l_{r+m_2} } }b_n^{ (l_{r+m_2-1} ) }(v_{m_2},B_{v_{m_2}}^x)\frac{\partial}{\partial x_{l_{r+m_2+1}}}b_n^{(l_{r+m_2})}(u_{r+1},B_{u_{r+1}}^x)\\
&\times  \cdots  \frac{\partial}{\partial x_j}b_n^{(l_{m_1+m_2-1})}(u_{m_1},B_{u_{m_1}}^x)  dv_1\cdots dv_{m_2} du_1\cdots du_{m_1}  \bigg\|_{L^{2^q}(\Omega; \R)}\Bigg)^p. 
\end{split}
\end{align}
Observe the second order partial derivatives in the integrand.

The following step is to apply expectation and get rid of the second order derivatives. To do so, we will use the estimate from Proposition \ref{mainEstimate}.

Before applying Proposition \ref{mainEstimate} we need to make the following observation on the integrating regions in connection to (\ref{I2eq2}): the iterated integrals of (\ref{I2eq2}) can be split up as a sum of integrals where the regions which we integrate over are ordered. Indeed, using formula \eqref{shuffle2} we express the term in (\ref{I2eq2}) as follows
\begin{align}\label{I2eq3}
\begin{split}
E&\|I_2^n\|^p\\
\leq&  C_p \ \Bigg( \sum_{m_1\geq 1} \sum_{r=1}^{m_1}\sum_{m_2\geq 1}\sum_{i,j,k=1}^d \sum_{l_1,\dots,l_{m_1+m_2-1}=1}^d \sum_{\sigma \in S_r(m,n)}\bigg\| \int_{\Lambda_{m_1+m_2}(s_1\vee s_2,t)} \frac{\partial}{\partial x_{l_1}}b_n^{(i)}(w_{\sigma(1)},B_{w_{\sigma(1)}}^x) \\
&\times \frac{\partial}{\partial x_{l_2}}b_n^{(l_1)}(w_{\sigma(2)},B_{w_{\sigma(2)}}^x)\cdots  \frac{\partial}{\partial x_{l_{r-1}}}b_n^{(l_{r-2})}(w_{\sigma(r-1)},B_{w_{\sigma(r-1)}}^x)\frac{\partial}{\partial x_k}\frac{\partial}{\partial x_{l_r}}b_n^{(l_{r-1})}(w_{\sigma(r)},B_{w_{\sigma(r)}}^x) \\
&\times \frac{\partial}{\partial x_{l_{r+1}}}b_n^{(l_r)}(w_{\sigma(r+1)},B_{w_{\sigma(r+1)}}^x)\cdots  \frac{\partial}{\partial x_{ l_{r+m_2} } }b_n^{ (l_{r+m_2-1} ) }(w_{\sigma(r+m_2)},B_{w_{\sigma(r+m_2)}}^x)\\
&\times\frac{\partial}{\partial x_{l_{r+m_2+1}}}b_n^{(l_{r+m_2})}(w_{\sigma(r+m_2+1)},B_{w_{\sigma(r+m_2+1)}}^x) \cdots \\
&  \cdots  \frac{\partial}{\partial x_j}b_n^{(l_{m_1+m_2-1})}(w_{\sigma(m_1+m_2)},B_{w_{\sigma(m_1+m_2)}}^x)  dw_1\cdots dw_{m_1+m_2} \bigg\|_{L^{2^q}(\Omega; \R)}\Bigg)^p. 
\end{split}
\end{align}

Now that the sets over which we integrate are symmetric we can use deterministic integration by parts to write the integrals in (\ref{I2eq3}) to the power two as a sum of at most $2^{2m}$ summands of the form
$$\int_{\Lambda_{2m}^s (s_1\vee s_2,t)} g_1(w_1)\cdots g_{2m}(w_{2m})dw_1\cdots dw_{2m}$$
where $m:=m_1+m_2$ and $g_l \in \left\{ \frac{\partial}{\partial x_j} b^{(i)}(\cdot, B_{\cdot}^x), \frac{\partial^2}{\partial x_{l_1} \partial x_{l_2}} b^{(k)}(\cdot,B_{\cdot}^x), i,j,k,l_1,l_2=1,\dots d\right\}$ and \mbox{$l=1,\dots,2m$}. Once more, we can write the integrals to the power four as a sum of at most $2^{8m}$ summands of the form
$$\int_{\Lambda_{4m}^s (s_1\vee s_2,t)} g_1(w_1)\cdots g_{4m}(w_{4m})dw_1\cdots dw_{4m}.$$
Repeating this principle, one can write the integrals to the power $2^q$ as a sum of at most $2^{q2^q m}$ summands of the form
$$ \int_{\Lambda_{2^q m}^s (s_1\vee s_2,t)} g_1(w_1)\cdots g_{2^q m}(w_{2^q m})dw_1\cdots dw_{2^q m}.$$

Combining this with Proposition \ref{mainEstimate} we obtain
\begin{align}\label{I2eq4}
\begin{split}
&\bigg\| \int_{\Lambda_{m_1+m_2}(s_1\vee s_2,t)} \frac{\partial}{\partial x_{l_1}}b_n^{(i)}(w_{\sigma(1)},B_{w_{\sigma(1)}}^x) \\
&\times \frac{\partial}{\partial x_{l_2}}b_n^{(l_1)}(w_{\sigma(2)},B_{w_{\sigma(2)}}^x)\cdots  \frac{\partial}{\partial x_{l_{r-1}}}b_n^{(l_{r-2})}(w_{\sigma(r-1)},B_{w_{\sigma(r-1)}}^x)\frac{\partial}{\partial x_k}\frac{\partial}{\partial x_{l_r}}b_n^{(l_{r-1})}(w_{\sigma(r)},B_{w_{\sigma(r)}}^x) \\
&\times \frac{\partial}{\partial x_{l_{r+1}}}b_n^{(l_r)}(w_{\sigma(r+1)},B_{w_{\sigma(r+1)}}^x)\cdots  \frac{\partial}{\partial x_{ l_{r+m_2} } }b_n^{ (l_{r+m_2-1} ) }(w_{\sigma(r+m_2)},B_{w_{\sigma(r+m_2)}}^x)\\
&\times\frac{\partial}{\partial x_{l_{r+m_2+1}}}b_n^{(l_{r+m_2})}(w_{\sigma(r+m_2+1)},B_{w_{\sigma(r+m_2+1)}}^x) \cdots \\
&  \cdots  \frac{\partial}{\partial x_j}b_n^{(l_{m_1+m_2-1})}(w_{\sigma(m_1+m_2)},B_{w_{\sigma(m_1+m_2)}}^x)  dw_1\cdots dw_{m_1+m_2} \bigg\|_{L^{2^q}(\Omega; \R)}  \\
&\leq \left(\frac{2^{q2^q (m_1+m_2)}C_{d,p,T}^{2^q (m_1+m_2)} \|b_n'\|_{\infty}^{2^q (m_1+m_2)}|t-(s_1 \vee s_2)|^{2^{q-1}(m_1+m_2)}}{\Gamma\left( 2^{q-1}(m_1+m_2)+1\right)}\right)^{2^{-q}} \\
&=\frac{2^{q(m_1+m_2)}C_{d,p,T}^{m_1+m_2}\|b_n'\|_{\infty}^{m_1+m_2}|t-(s_1 \vee s_2)|^{(m_1+m_2)/2}}{\left[\left(2^{q-1}(m_1+m_2)\right)!\right]^{2^{-q}}}
\end{split}
\end{align}

Using the bound in (\ref{I2eq4}) we get
\begin{align*}
E&\|I_2^n\|^p \leq\\
&\leq \Bigg( \sum_{m_1\geq 1} \sum_{r=1}^{m_1}\sum_{m_2\geq 1} C^{m_1+m_2}\frac{d^{m_1+m_2+2}2^{q(m_1+m_2)}C_{d,p,T}^{m_1+m_2} \|b_n'\|^{m_1+m_2}|t-(s_1 \vee s_2)|^{(m_1+m_2)/2}}{\left[\left(2^{q-1}(m_1+m_2)\right)!\right]^{2^{-q}}} \Bigg)^p\\
&\leq \Bigg( \sum_{m_1\geq 1}\sum_{m_2\geq 1} m_1C^{m_1+m_2}\frac{d^{m_1+m_2+2}2^{q(m_1+m_2)}C_{d,p,T}^{m_1+m_2} \|b_n'\|^{m_1+m_2}|t-(s_1 \vee s_2)|^{(m_1+m_2)/2}}{\left[\left(2^{q-1}(m_1+m_2)\right)!\right]^{2^{-q}}} \Bigg)^p\\
&\leq \Bigg( \sum_{m\geq 1} mC^m\frac{d^{m+2}2^{qm}C_{d,p,T}^{m} \|b_n'\|_{\infty}^{m}|t-(s_1 \vee s_2)|^{m/2}}{\left[\left(2^{q-1}m\right)!\right]^{2^{-q}}} \Bigg)^p\\
&\leq C_{d,p,T} f(\|b_n'\|_{\infty})
\end{align*}
for some continuous function $f$ only depending on $d$, $p$ and $T$. As a result,
$$\sup_{n\geq 0}\sup_{s_1,s_2\in [0,T]} E\|I_2^n\|^p \leq C_{d,p,T} \sup_{n\geq 0}\sup_{s_1,s_2\in [0,T]} f(\|b_n'\|_{\infty}) < \infty.$$

Finally, one can bound $E\|I_1^n\|^p$ using exactly the same steps as for $I_2^n$.
\end{proof}

We are now in a position to state one of the main results of this section on the Malliavin regularity of the solution to SDE (\ref{SDE}).

\begin{thm}\label{mainthm}
Assume that $b$ satisfies condition (\ref{bcondition}) for some $k\geq 1$. Let $X_t$, $t\in[0,T]$ denote the solution to equation (\ref{SDE}). Then
$$X_t \in \bigcap_{p\geq 1} \mathbb{D}^{k+1,p}(\Omega).$$
\end{thm}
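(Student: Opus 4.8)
The plan is to follow exactly the scheme of the proof of Theorem \ref{mainprop} and reduce the claim to a uniform bound on a smooth approximation. I would choose compactly supported functions $b_n \in C^{k+1}(\R^d)$ approximating $b$ a.e.\ in $(t,x)$, with $\sup_n |b_n(t,x)| \le C(1+|x|)$ and $\sup_n \|D^j b_n\|_\infty < \infty$ for each $j = 1, \dots, k$; the latter is possible precisely because \eqref{bcondition} grants $D^j b \in L^\infty(\R^d)$ for $j \le k$. Let $X_t^n$ solve \eqref{SDE} with drift $b_n$. The $L^p(\Omega)$-convergence $X_t^n \to X_t$ is obtained verbatim as in Theorem \ref{mainprop} (Gronwall together with the density of $X_u$ from Remark \ref{remdensity}). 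Since the $\mathbb{D}^{k+1,p}(\Omega)$-norm is the sum of the $L^p$-norms of the Malliavin derivatives of orders $0,1,\dots,k+1$, the cited Proposition \cite[1.5.5]{Nua10} reduces everything to proving, for each $i \le k+1$,
\[
\sup_{n \ge 0}\ \sup_{s_1, \dots, s_{i} \in [0,T]} E\big\|D_{s_{i}} \cdots D_{s_1} X_t^n\big\|^p < \infty \quad \text{for every } p \ge 1,
\]
the top order $i=k+1$ being the binding case.

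Next I would compute the iterated Malliavin derivative by iterating the Picard expansion \eqref{picard} and Leibniz's rule, exactly as in the passage from \eqref{malliavin1} to \eqref{malliavin22}. Each application of $D_{s_j}$ to a product of derivative-factors of $b_n$ does two things: by the chain rule it raises the order of exactly one factor by one (the $b_n'' \, D_{s_j}X$ contribution), and through $D_{s_j} X = \mathcal{I}_d + \sum_{m\ge 1}\int_{\Lambda_m} \prod b_n'$ it inserts a fresh block of order-one factors over a new simplex. The key bookkeeping observation is therefore: in every resulting term whose integrand is a product of $M$ factors $D^{1+\beta_i} b_n^{(\cdot)}$ with $\beta_i \ge 0$, one necessarily has $\sum_{i=1}^M \beta_i = i-1$, since after $D_{s_1}$ the ``excess'' of total derivative order over the number of factors is zero and each of the remaining $i-1$ differentiations increases it by exactly one (the inserted factors raise count and order equally). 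In particular every factor has order $1+\beta_i$ with $0 \le \beta_i \le i-1 \le k$.

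Then I would reorganize the nested simplices into single ordered simplices $\Lambda_{M}(s_1 \vee \cdots \vee s_{k+1}, t)$ by repeated use of the shuffle identity \eqref{shuffle2} and symmetrize the powers by deterministic integration by parts, exactly as in \eqref{I2eq3}. The crucial step is the application of Proposition \ref{mainEstimate}: each factor of order $1+\beta_i$ is split as $D^{\alpha_i} g_i$ with $|\alpha_i| \le 1$, taking $\alpha_i = 0$, $g_i = b_n'$ when $\beta_i = 0$, and pulling out a single partial derivative ($|\alpha_i| = 1$, $g_i = D^{\beta_i} b_n^{(\cdot)}$) when $\beta_i \ge 1$. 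Because $0 \le \beta_i \le k$, every $\|g_i\|_\infty$ is bounded uniformly in $n$ by $C_b := \max_{1 \le j \le k}\sup_n \|D^{j} b_n\|_\infty < \infty$. This is precisely the mechanism that let the $b_n''$ factor in Theorem \ref{mainprop} be controlled by $\|b_n'\|_\infty$ rather than $\|b_n''\|_\infty$, and it explains why $k$ bounded derivatives of $b$ buy $k+1$ Malliavin derivatives: the highest factor ever produced, $D^{k+1}b_n$, is estimated as one partial derivative of the bounded function $D^{k}b_n$. Proposition \ref{mainEstimate} then bounds each simplex integral by a quantity of the form $C^M C_b^M |t - (s_1\vee\cdots\vee s_{k+1})|^{M/2} / \Gamma(\tfrac{M}{2}+1)$.

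Finally I would sum over all indices. As in Theorem \ref{mainprop}, the number of combinatorial terms at each level $M$ grows only geometrically (a factor $C^M$, using the $C^{m+n}$ bound on the number of shuffles recorded after \eqref{shuffle2}), whereas the denominators coming from $\Gamma(\cdot)^{2^{-q}}$ grow super-exponentially; hence the resulting series in $M$ converges to a finite limit depending only on $d$, $p$, $T$ and $C_b$, uniformly in $n$ and in $s_1, \dots, s_{k+1}$. This yields the required uniform bound, and the theorem follows. The main obstacle is purely organizational: controlling the proliferation of terms produced by iterating Leibniz's rule $k$ times and verifying that the ``excess $=i-1$'' accounting holds on every branch, so that no factor ever needs a derivative of $b$ of order exceeding $k$ to be estimated. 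Once this is in place, the analytic estimate is identical to the one already carried out for $k=1$.
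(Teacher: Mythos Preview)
Your proposal is correct and follows essentially the same route as the paper: iterate the Picard--Leibniz expansion used in Theorem \ref{mainprop}, reorganize the nested simplices via the shuffle identity \eqref{shuffle2}, and invoke Proposition \ref{mainEstimate} to absorb one spatial derivative per factor so that only $\|D^{j} b_n\|_\infty$ with $j\le k$ ever appears. Your accounting $\sum_i \beta_i = i-1$ is in fact a cleaner way to phrase what the paper states more loosely as ``at most one factor $b_n^{(k+1)}$ and the rest $b_n^{(j)}$, $j\le k$''; both descriptions lead to the same estimate and the same convergent series.
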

\begin{proof}
The proof of this more general result relies on Theorem \ref{mainprop} by iterating all arguments up to $k+1$. Similarly as before, let $\{b_n\}_{n\geq 1} \subset \mathcal{C}^{k+1}(\R^d)$ be an approximating sequence of functions such that $b_n \to b$ a.e. in $t\in [0,T]$ and $x\in \R^d$ as $n\to \infty$ and $\sup_{n\geq 0} |b_n(t,x)|\leq C(1+|x|)$ and $\sup_n \|b_n^{(j)}\|<\infty$, $j\leq k$ where $\|\cdot\|$ denotes any norm in $\R^d \times \overset{j)}{\dots} \times \R^d$, $j\leq k$. For each $t\in[0,T]$, denote by $X_t^n$ the sequence of random variables in $L^p(\Omega)$ solution to equation (\ref{SDE}) with drift coefficient $b_n$. Then we wish to compute the Malliavin derivative of $X_t^n$ up to order $k+1$. This becomes a large expression where the terms increase at a binomial speed. We saw in the proof of Proposition (\ref{mainprop}) that the second order Malliavin derivative of $X_t^n$ can be written as $D_{s_2}D_{s_1}X_t^n = I_1^n+I_2^n$ where the integrals in $I_2^n$ are doubled. If we fix $s_3\in[0,t]$ then $D_{s_3}D_{s_2}D_{s_1} X_t^n = D_{s_3} I_1^n + D_{s_3} I_2^n = I_1^n + I_2^n + I_3^n + I_4^n$ and so on. Each term $I_i^n$, $i=1,2,3,4$ is a sum of integrals of the form (\ref{malliavin22}) with at most one factor $b_n^{(3)}$. Iterating this argument, we have that for fixed $s_1,\dots,s_{k+1}\in [0,t]$
$$D_{s_{k+1}}\cdots D_{s_1}X_t^n = I_1^n + \dots + I_{2^{k}}^n$$
where each $I_i^n$, $i=1,2,\dots, 2^k$ is an integral over at most $\Lambda_{m_1+\dots+m_{k+1}}$ with at most one factor $b_n^{(k+1)}$ and the rest $b_n^{(j)}$, $j \leq k$. This can be readily checked by looking at expression (\ref{malliavin22}). Then, estimating $I_{2^{k}}^n$ implies that all former terms are also bounded.

To illustrate $I_{2^{k}}^n$ we use expression (\ref{malliavin22}) and apply $D_{s_3}\cdots D_{s_{k+1}}$ and focus on the last term. In order to simplify notation and make the reading clearer we consider indices $m_1,\dots,m_{k+1},r_1,\dots, r_{k}\in \mathbb{N}\setminus \{0\}$ and denote
$$\sum_{\substack{m_1,\dots,m_{k+1}\\r_1,\dots,r_{k}}} :=\sum_{m_1\geq 1}\sum_{r_1=1}^{m_1} \sum_{m_2\geq 1}\sum_{r_2=1}^{m_1+m_2}\cdots \sum_{r_{k}=1}^{m_1+\cdots+m_{k}}\sum_{m_{k+1}\geq 0},$$
as well as,
$$\int_{\Delta} := \int_{\Lambda_{m_1}(s_1\vee\dots\vee s_{k+1},t)}\int_{\Lambda_{m_2}(s_2\vee\dots\vee s_{k+1},t)}\cdots \int_{\Lambda_{m_{k+1}}(s_{k+1},t)}.$$
Then $I_{2^{k}}^n$ will take the following form
\begin{align*}
I_{2^{k}}^n &= \sum_{\substack{m_1,\dots,m_{k+1}\\r_1,\dots,r_{k}}} \int_{\Delta} \mathcal{A}(u_1^1,\dots,u_{m_1}^1,\dots, u_{1}^{k+1},\dots, u_{m_{k+1}}^{k+1}) du_1^{k+1}\cdots du_{m_{k+1}}^{k+1} \cdots du_1^1 \cdots du_{m_1}^1
\end{align*}
with integrand
\begin{align*}
\mathcal{A}:= g_n(u_1^1)\cdots g_n(u_{r_1}^1)\bigg[g_n(u_1^2)\cdot g_n(u_{r_2}^2)&\bigg[\cdots g_n(u_1^{k+1})\cdots\\
&\cdots g_n(u_{m_{k+1}}^{k+1})\bigg]g_n(u_{r_{k}+1}^{k})\cdots g_n(u_{m_2}^2)\bigg]g_n(u_{r_1+1}^1)\cdots g_n(u_{m_1}^1)
\end{align*}
where the functions $g_n$ denote an element in the set
$$g_n\in \{D b_n, D^2 b_n, \cdots, D^{k+1} b_n\}.$$
Then, using exactly the same procedure as for $I_1^n$ and $I_2^n$, mutatis mutandis, we obtain an integral of products of partial derivatives of at most order $k+1$, this together with Proposition \ref{mainEstimate} one is able to get rid of the $k+1$-th derivative as we did for $I_2^n$ in Theorem \ref{mainthm}.
\end{proof}

To emphasize that the solution depends on the initial point $x$ we write $X_t^x$. Next result gives a condition for the regularity of $x\mapsto X_t^x$ in the space variable in the Sobolev sense.

\begin{thm}\label{sobreg}
Assume that $b$ satisfies condition (\ref{bcondition}) for some $k\geq 1$. Let $U\subset \R^d$ be an open bounded set and $X_t$, $t\in[0,T]$ denote the solution to equation (\ref{SDE}). Then
$$X_t^{\cdot} \in \bigcap_{p\geq 1} L^2\big(\Omega, W^{k+1,p}(U)\big).$$
\end{thm}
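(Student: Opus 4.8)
The plan is to transfer the Malliavin regularity established in Theorem \ref{mainthm} into spatial Sobolev regularity, exploiting the structural coincidence between spatial and Malliavin derivatives that is special to the additive-noise setting. First I would fix the same approximating sequence $\{b_n\}_{n\geq1}\subset\mathcal{C}^{k+1}(\R^d)$ as in the proof of Theorem \ref{mainthm}, with $\sup_n|b_n(t,x)|\leq C(1+|x|)$ and $\sup_n\|b_n^{(j)}\|_\infty<\infty$ for $j\leq k$, and denote by $X_t^{n,x}$ the flow of the regularised SDE, which is smooth in $x$ since $b_n$ is smooth. Differentiating $X_t^{n,x}=x+\int_0^t b_n(u,X_u^{n,x})\,du+B_t$ in $x$ and iterating Picard as in (\ref{picard}) shows that, for every multiindex $\alpha$ with $|\alpha|\leq k+1$, the spatial derivative $\partial_x^\alpha X_t^{n,x}$ is a series of iterated integrals over simplices $\Lambda_m(0,t)$ whose integrands are products of derivatives of $b_n$ evaluated along the path, carrying at most one factor of the top-order derivative $b_n^{(|\alpha|)}$ and all remaining factors of order at most $|\alpha|$. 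This is exactly the algebraic form of the expansions (\ref{malliavin22}) and of the term $I_{2^k}^n$ analysed in Theorems \ref{mainprop} and \ref{mainthm}; the only difference is that every Malliavin base-point $s_i$ is now the fixed lower endpoint $0$ of the flow (for additive noise the first spatial variation coincides with $D_0X_t^{n,x}$).

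Consequently the estimates already carried out apply essentially verbatim. I would re-run the same chain of arguments — Girsanov's theorem together with Lemma \ref{epsilonbound} (now with the compact set $K=\overline U$ supplying the uniform exponential control over the base point $x$), the shuffle identity (\ref{shuffle2}) to symmetrise the integration regions, deterministic integration by parts followed by raising to the power $2^q$, and Proposition \ref{mainEstimate} to remove the single top-order derivative — to obtain the uniform bound
\begin{align*}
\sup_{n\geq0}\ \sup_{x\in\overline U}\ E\big\|\partial_x^\alpha X_t^{n,x}\big\|^p<\infty,\qquad |\alpha|\leq k+1,\ p\geq1 .
\end{align*}

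With these moment bounds the Sobolev estimate is routine. Since $U$ is bounded, subadditivity and Minkowski's integral inequality give, for $p\geq2$,
\begin{align*}
E\Big[\|X_t^{n,\cdot}\|_{W^{k+1,p}(U)}^2\Big]\leq C\sum_{|\alpha|\leq k+1}\Big(\int_U\big(E\|\partial_x^\alpha X_t^{n,x}\|^2\big)^{p/2}\,dx\Big)^{2/p}\leq C_{U,p,t},
\end{align*}
uniformly in $n$, while the case $1\leq p<2$ follows from the preceding one by Hölder on the bounded set $U$. Thus $\{X_t^{n,\cdot}\}_n$ is bounded in the reflexive space $L^2(\Omega,W^{k+1,p}(U))$ for every $1<p<\infty$, so a subsequence converges weakly to some $Z$. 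On the other hand, integrating the $L^p(\Omega)$-convergence of Theorem \ref{mainprop} over $x\in U$ (using the zeroth-order uniform moment bound and dominated convergence) yields $X_t^{n,\cdot}\to X_t^{\cdot}$ strongly, hence weakly, in $L^2(\Omega,L^p(U))$. Identifying the two weak limits forces $Z=X_t^{\cdot}$, and since weak limits of weak derivatives are the weak derivatives of the weak limit, $X_t^{\cdot}\in L^2(\Omega,W^{k+1,p}(U))$. Running this for every $p>1$ and recovering all $p\geq1$ (in particular $p=1$) from the embedding $W^{k+1,p}(U)\hookrightarrow W^{k+1,p_0}(U)$ on bounded $U$ gives the claimed intersection.

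The main obstacle is this last, functional-analytic step: one must justify that the weak $W^{k+1,p}$-limit of the approximate flows is genuinely the weak derivative of $X_t^{\cdot}$. This is the Sobolev analogue of the Malliavin closedness result (Proposition 1.5.5 of \cite{Nua10}) invoked earlier, and the delicate point is handling the simultaneous $\Omega$- and $x$-dependence: one works in $L^2(\Omega,W^{k+1,p}(U))$, uses its reflexivity for $1<p<\infty$, and passes to the limit in the defining duality against test functions $\varphi\in C_c^\infty(U)$ to transfer the derivatives. Everything else reduces to the estimates already established in Theorems \ref{mainprop} and \ref{mainthm}.
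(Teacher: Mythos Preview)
Your proposal is correct and follows essentially the same route as the paper: observe that the spatial first variation satisfies the same linear equation as $D_sX_t^n$ with $s=0$, re-run the estimates of Theorems \ref{mainprop}--\ref{mainthm} (Girsanov, Lemma \ref{epsilonbound} with $K=\overline U$, shuffles, Proposition \ref{mainEstimate}) to get the uniform moment bounds $\sup_n\sup_{x\in\overline U}E\|\partial_x^\alpha X_t^{n,x}\|^p<\infty$, and then use reflexivity of $L^2(\Omega,W^{k+1,p}(U))$ plus Banach--Alaoglu together with the strong $L^p(\Omega)$-convergence of $X_t^{n,x}\to X_t^x$ to identify the weak limit. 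Your extra care about the $p=1$ endpoint and about the weak-limit identification step goes slightly beyond what the paper spells out, but the underlying argument is the same.
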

\begin{proof}
This result actually follows by observing that the process $\frac{\partial}{\partial x} X_t^x$ satisfies the following linear ODE
$$\frac{\partial}{\partial x} X_t^x = \mathcal{I}_d + \int_0^t b'(u, X_u^x)\frac{\partial}{\partial x}X_u^x du.$$

This equation is the same as (\ref{malliavin1}) when $s=0$. Using this observation, in connection with the same method employed in the proof of Theorem \ref{mainthm} by replacing the Malliavin derivative of $X_t$ with $\frac{\partial}{\partial x}X_t^x$ we get that for the approximating sequence of solutions $X_t^{n,x}$, $n\geq 0$ described in Theorem \ref{mainthm} we have
$$\sup_{n\geq 0}\sup_{x\in \overline{U}} E\left[\|\frac{\partial^{j}}{\partial x^{j}} X_t^{n,x}\|^p \right] <\infty$$
for all $j=0,\dots,k+1$ and any $p\geq 1$ so $X_t^{n,\cdot}$ is bounded in the Sobolev norm $ L^2(\Omega, W^{k+1,p}(U))$ for each $n\geq 0$. Indeed
$$\sup_{n\geq 0} \|X_t^{n,\cdot}\|_{L^2(\Omega, W^{k+1,p}(U))}^2 = \sup_{n\geq 0}\sum_{i=0}^{k+1} E\left[\|\frac{\partial^i}{\partial x^i}X_t^{n,\cdot}\|_{L^p(U)}^2\right]\leq \sum_{i=0}^{k+1} \int_{U} \sup_{n\geq 0}E\left[ \|\frac{\partial^i}{\partial x^i}X_t^{n,x}\|^p \right]dx <\infty.$$
Since $L^2(\Omega, W^{k+1,p}(U))$ is reflexive, by Banach-Alaoglu's theorem we get that the set $\{X_t^{n,x}\}_{n\geq 0}$ is weakly compact in the $L^2(\Omega, W^{k+1,p}(U))$ topology. Thus, there exists a subsequence $n(j)$, $j\geq 0$ such that
$$X_t^{n(j),\cdot}  \xrightarrow[j\to \infty]{w} Y \in L^2(\Omega, W^{k+1,p}(U)).$$

On the other hand, we have that $X_t^{n,x} \to X_t^x$ strongly in $L^p(\Omega)$, so by uniqueness of the limit we can conclude that
$$X_t^{\cdot} =Y, \ \ P-a.s.$$
\end{proof}

\begin{rem}
The previous result actually gives classical derivatives of the solution up to order $k+\alpha$ with $\alpha\in (0,1)$ as a consequence of the Sobolev embedding (\ref{embed}).
\end{rem}

\section{Application to the regularity of densities}

As mentioned in the introduction one implication of improving the Malliavin regularity of SDEs with drift coefficient satisfying hypotheses \textbf{(H)} is that the finite dimensional laws have $k$-times differentiable densities due to a result by V.Bally and L.Caramellino, see \cite{Bally2011}. We see this as an improvement of the regularity condition given by \cite{Kus82} for the additive noise case. In addition, we see that the boundedness of $b$ is not needed.

The following is a consequence of Theorem \ref{mainthm} for the special case $d=1$ and illustrates how we may gain regularity of the densities of solutions to (\ref{SDE}) and provide with an explicit expression for the density and its derivatives. Later on, we will show it for higher dimensions.

\begin{cor}
For $d=1$, let $p_{X_t}$ denote the density of the solution $X_t$ to equation (\ref{SDE}) for a given \mbox{$t\in [0,T]$}. If $b$ satisfies (\ref{bcondition}) for some $k\geq 1$ then $p_{X_t} \in C^{k-1}(\R)$ for every $t\in [0,T]$.
\end{cor}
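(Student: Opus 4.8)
\emph{Strategy.} The plan is to combine the Malliavin regularity furnished by Theorem \ref{mainthm} with the non-degeneracy of the Malliavin derivative of $X_t$, and then to invoke the integration-by-parts machinery of V.~Bally and L.~Caramellino \cite{Bally2011}, which upgrades $\mathbb{D}^{k+1,p}$-regularity (for every $p$) together with an integrable inverse Malliavin covariance into a density of class $C^{k-1}$. By Theorem \ref{mainthm}, hypothesis (\ref{bcondition}) for some $k\geq 1$ already gives $X_t\in\bigcap_{p\geq 1}\mathbb{D}^{k+1,p}(\Omega)$, so the only genuinely new input to produce is the non-degeneracy, and this is where dimension one makes life easy.

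\emph{Non-degeneracy.} First I would verify that the Malliavin derivative is bounded away from zero. In the scalar case $D_s X_t$ solves the linear equation (\ref{malliavin1}) with $s_1=s$, namely $D_s X_t = 1 + \int_s^t b'(u,X_u)\,D_s X_u\,du$ for $s\leq t$ and $D_s X_t = 0$ for $s>t$, whence
\begin{equation*}
D_s X_t = \exp\Bigl\{\int_s^t b'(u,X_u)\,du\Bigr\},\qquad 0\leq s\leq t.
\end{equation*}
Since (\ref{bcondition}) with $k\geq 1$ forces $b'=Db\in L^\infty(\R)$, and since $X_u$ admits a density by Remark \ref{remdensity} so that $b'(u,X_u)$ is well defined, we obtain the deterministic two-sided bound $e^{-T\|b'\|_\infty}\leq D_s X_t\leq e^{T\|b'\|_\infty}$ for a.e. $s\leq t$. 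Consequently the scalar Malliavin covariance $\gamma_{X_t}:=\int_0^t (D_s X_t)^2\,ds$ satisfies $\gamma_{X_t}\geq t\,e^{-2T\|b'\|_\infty}>0$ deterministically, so $\gamma_{X_t}^{-1}$ is bounded and in particular lies in $\bigcap_{p\geq 1}L^p(\Omega)$.

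\emph{Integration by parts.} With non-degeneracy and $X_t\in\bigcap_p\mathbb{D}^{k+1,p}$ in hand, I would run the iterated integration-by-parts formula (see \cite[Ch.~2]{Nua10} and \cite{Bally2011}). Writing $H_1(X_t,1):=\delta\bigl(\gamma_{X_t}^{-1}DX_t\bigr)$ and $H_q(X_t,1):=H_1\bigl(X_t,H_{q-1}(X_t,1)\bigr)$ for the successive Malliavin weights, one obtains for each $0\leq j\leq k-1$ the representation
\begin{equation*}
p_{X_t}^{(j)}(y) = (-1)^{j}\,E\bigl[\mathbf{1}_{\{X_t>y\}}\,H_{j+1}(X_t,1)\bigr].
\end{equation*}
The weight $H_{j+1}(X_t,1)$ is built from Malliavin derivatives of $X_t$ up to order $j+2\leq k+1$ together with powers of the bounded quantity $\gamma_{X_t}^{-1}$; hence each $H_{j+1}(X_t,1)$ lies in every $L^p(\Omega)$ precisely because $X_t\in\bigcap_p\mathbb{D}^{k+1,p}$. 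Finally, since $X_t$ has a density and therefore $P(X_t=y)=0$ for every $y$, dominated convergence shows that $y\mapsto E[\mathbf{1}_{\{X_t>y\}}H_{j+1}(X_t,1)]$ is continuous for each $j\leq k-1$, which identifies the right-hand side as the continuous $j$-th derivative of $p_{X_t}$ and yields $p_{X_t}\in C^{k-1}(\R)$.

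\emph{Main obstacle.} I expect the delicate point to be the bookkeeping certifying that the iterated weights $H_{j+1}(X_t,1)$ are well defined and $L^p$-integrable: each integration by parts consumes one Malliavin derivative and, via H\"older, one power of integrability, so one must check that $j+1\leq k$ integrations by parts suffice and that the ``$\bigcap_{p\geq 1}$'' in Theorem \ref{mainthm} absorbs the successive H\"older losses. In dimension one this is considerably tamer than the vector-valued setting of \cite{Bally2011}, since $\gamma_{X_t}$ is a scalar bounded below by a deterministic constant, so $\gamma_{X_t}^{-1}$ needs no separate moment estimate; the remaining verification is the routine recursion showing that $D\gamma_{X_t}^{-1},D^2\gamma_{X_t}^{-1},\dots$ remain in all $L^p$, which follows from the chain rule and the bounds just established.
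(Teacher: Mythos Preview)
Your proposal is correct and follows essentially the same route as the paper. Both arguments exploit the explicit exponential formula $D_s X_t=\exp\{\int_s^t b'(u,X_u)\,du\}$ in dimension one to obtain a deterministic lower bound (the paper bounds $\int_0^T D_s X_t\,ds$ from below, you bound $\gamma_{X_t}=\int_0^t(D_s X_t)^2\,ds$; this is cosmetic), combine it with $X_t\in\bigcap_p\mathbb{D}^{k+1,p}$ from Theorem~\ref{mainthm}, and then run the iterated Malliavin integration-by-parts to produce the representation of $p_{X_t}^{(j)}$ for $j\le k-1$. The only stylistic difference is that the paper verifies by hand that the successive weights $G_i=\delta\bigl(G_{i-1}(\int_0^T D_sX_t\,ds)^{-1}\bigr)$ lie in $\mathrm{Dom}(\delta)$ via \cite[p.~115]{Nua10}, whereas you invoke the packaged criterion of \cite{Bally2011}; the underlying mechanism is identical.
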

\begin{proof}
Let $G_0, G_1,G_2,\dots,G_k$ be the random variables defined as $G_0=1$ and for each $i=1,\dots,k$
$$G_i=\delta \left(G_{i-1}\cdot\left(\int_0^T D_s X_t ds\right)^{-1}\right),$$
where $\delta$ denotes the Skorokhod integral as introduced in \eqref{skorokhod}.

It is known that if $X_t\in \mathbb{D}^{1,2}(\Omega)$, $\int_0^T D_s X_t ds \neq 0$, $P-a.s.$ and \mbox{$G_i\left( \int_0^T D_s X_t ds\right)^{-1}\in \mbox{Dom}(\delta)$} for each $i=0,\dots,k$ then $X_t$ has a density of class $C^k(\R)$ and
\begin{align}\label{density}
\frac{d^i}{dy^i}p_{X_t}(y) = (-1)^i E\left[ \textbf{1}_{\{X_t>y\}} G_{i+1}\right]
\end{align}
for each $i=0,\dots,k$. See \cite[p115]{Nua10}.

We will then prove that $G_i \left(\int_0^T D_s X_t ds\right)^{-1}\in \mbox{Dom}(\delta)$ for $i=0,\dots,k-1$. First, observe that for dimension $d=1$ we can easily solve the linear SDE for $D_s X_t$ and write
\begin{align}\label{mallexp}
D_s X_t = \exp\left\{\int_s^t b'(u,X_u)du\right\}
\end{align}
where $b'$ denotes the weak derivative of $b$ (one may also use local time to express (\ref{mallexp}) independently of $b'$ if $b$ is non-regular, see \cite{Ein2000}). Hence, for any $t\in [0,T]$ there is an $\varepsilon >0$ such that \mbox{$\int_0^T D_s X_tds \geq \varepsilon >0$}. Since $x\mapsto \frac{1}{x}$ is smooth on the domain $(\varepsilon, \infty)$ we see that \mbox{$\left(\int_0^T D_s X_t ds\right)^{-1} \in \mathbb{D}^{k,2}(\Omega)$} since $X_t \in \mathbb{D}^{k+1,2}(\Omega)$ by Theorem \ref{mainthm}. Denote $F:=\left(\int_0^T D_s X_t ds\right)^{-1}$.

Now, since $F\in \mathbb{D}^{1,2}(\Omega)$ we have $F\in \mbox{Dom}(\delta)$ and $G_1=\delta(F) = FW(T) + \int_0^T D_s F ds$. Then we see that $G_1\in \mathbb{D}^{1,2}(\Omega)$ and hence $G_1F\in \mathbb{D}^{1,2}(\Omega)$ therefore $G_1 F\in \mbox{Dom}(\delta)$ with \mbox{$G_2= \delta(G_1F)= G_1FW(T) - \int_0^T [D_s G_1 F + G_1 D_sF]ds$}. Again, it is readily checked that \mbox{$G_2\in \mathbb{D}^{1,2}(\Omega)$} since \mbox{$G_1,F\in \mathbb{D}^{1,2}(\Omega)$} so $G_2F\in \mbox{Dom}(\delta)$. For a fixed $i=0,\dots,k-1$ we have $G_i,F\in \mathbb{D}^{1,2}(\Omega)$ therefore $G_iF\in \mbox{Dom}(\delta)$ with $G_{i+1}=\delta(G_iF)= G_iFW(T) - \int_0^T [D_sG_i F + G_i D_sF]ds$. So $G_i$ is well-defined for $i=0,\dots,k$ but we can not say anything about $G_{k+1}$ so $p_{X_t}$ is at least $k-1$-times differentiable with derivatives given by (\ref{density}).
\end{proof}

As a consequence of the Malliavin regularity we have shown for SDEs of the form (\ref{SDE}) we may apply the results by V.Bally and L.Caramellino, see \cite{Bally2011}, to be able to obtain regularity of the densities, also in higher dimension. In order to do so, we need to study integrability properties of the Malliavin covariance matrix. Let us denote
$$\gamma_{X_t}^{ij} := \langle D_{\cdot} X_t^{(i)}, D_{\cdot} X_t^{(j)}\rangle_{H}, \ \ i,j=1,\dots,d,$$
the Malliavin covariance matrix of the process $X_t$, given $t\in [0,T]$. We will say that \mbox{$\gamma_{X_t} = (\gamma_{X_t}^{ij})_{i,j=1,\dots,d}$} satisfies the \emph{non-degeneracy condition} whenever
\begin{align}\label{nondeg}
(\det \gamma_{X_t})^{-1} \in \bigcap_{p\geq 1} L^p(\Omega).
\end{align}

Next, we invoke a result by \cite[Proposition 23]{Bally2011} which gives us the desired properties on the density of $X_t$, $t\in [0,T]$.

\begin{prop}
Let $F=(F^1,\dots,F^d)$ with $F^1,\dots,F^d \in \bigcap_{p\geq 1} \mathbb{D}^{k+1,p}(\Omega)$. Assume that condition (\ref{nondeg}) holds for $\gamma_F$. Denote by $p_F$ the density of $F$. Then $p_F \in C^{k-1,\alpha}(\R^d)$ with $\alpha<1$, i.e. $p_F$ is $k-1$-times differentiable with H\"{o}lder continuous derivatives of exponent $\alpha<1$.
\end{prop}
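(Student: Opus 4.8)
Since the statement is exactly \cite[Proposition~23]{Bally2011}, in principle one only verifies that its hypotheses hold; let me nonetheless sketch the argument I would give. The plan is threefold: (i) produce integration-by-parts (IBP) formulas of every order up to $k$ whose weights have finite moments of all orders; (ii) convert these formulas into regularity of $p_F$; and (iii) upgrade the top derivative to Hölder continuity by a Sobolev embedding. The non-degeneracy condition~(\ref{nondeg}) and the hypothesis $F\in\bigcap_{p\geq1}\mathbb{D}^{k+1,p}(\Omega)$ feed precisely steps (i) and (ii).

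First I would build the IBP weights inductively. For smooth compactly supported $\f$, the chain rule gives $D(\f(F))=\sum_i\partial_i\f(F)\,DF^i$; pairing in $H$ with $DF^j$ and inverting the Malliavin matrix yields
\begin{equation*}
\partial_i\f(F)=\sum_{j=1}^d(\gamma_F^{-1})_{ij}\,\langle D(\f(F)),DF^j\rangle_H .
\end{equation*}
Applying the duality formula (Theorem~\ref{duality}) to transfer $D$ onto a Skorokhod integral gives $E[\partial_i\f(F)]=E[\f(F)\,H_{e_i}]$ with $H_{e_i}=\sum_j\delta\big((\gamma_F^{-1})_{ij}DF^j\big)$. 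The entries of $\gamma_F$ lie in $\bigcap_{p\geq1}L^p(\Omega)$ since $F\in\bigcap_{p\geq1}\mathbb{D}^{1,p}$, and~(\ref{nondeg}) controls $(\det\gamma_F)^{-1}$, so by Cramer's rule and the standard inversion argument (cf. \cite{Nua10}) one gets $\gamma_F^{-1}\in\bigcap_{p\geq1}\mathbb{D}^{k,p}$. Iterating the identity $k$ times then produces weights $H_\gamma\in\bigcap_{p\geq1}L^p(\Omega)$ for all $|\gamma|\leq k$, the highest Malliavin derivative of $F$ that enters being of order $k+1$ — exactly what is assumed. I expect the integrability bookkeeping through these iterated Skorokhod integrals, carried out via the Meyer inequalities together with the verification $\gamma_F^{-1}\in\mathbb{D}^{k,p}$, to be the main technical obstacle.

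It remains to read off the regularity of the density. In dimension one this is transparent: with the representation $\partial^{k-1}p_F(y)=(-1)^{k-1}E[\mathbf{1}_{\{F>y\}}H_{(k)}]$ and $H_{(k)}\in\bigcap_{p\geq1}L^p$, for $y<y'$ one estimates
\begin{equation*}
|\partial^{k-1}p_F(y)-\partial^{k-1}p_F(y')|\leq E\big[\mathbf{1}_{\{y<F\leq y'\}}|H_{(k)}|\big]\leq \|H_{(k)}\|_{L^p}\,P(y<F\leq y')^{1/p'},
\end{equation*}
and since $p_F$ is bounded (already $p_F(y)=E[\mathbf{1}_{\{F>y\}}H_{(1)}]$ gives $\|p_F\|_\infty\leq\|H_{(1)}\|_{L^1}$) this is at most $C|y-y'|^{1/p'}$; letting $p'\searrow1$ yields Hölder exponent $\alpha$ for every $\alpha<1$, i.e. $p_F\in C^{k-1,\alpha}(\R)$. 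In dimension $d>1$ the analogous pointwise representation would differentiate $\f$ once in each coordinate and hence cost a full $d$ orders, forcing the stronger hypothesis $F\in\mathbb{D}^{k+d,p}$; to avoid this loss I would instead invoke the Riesz-potential representation of the density underlying \cite{Bally2011}, in which each differentiation of $p_F$ costs only one order up to $\varepsilon$ thanks to the $L^p$-boundedness of the Riesz transforms. This gives $p_F\in W^{k,p}(\R^d)$ for every $p\geq1$, and then the embedding~(\ref{embed}) with $r=k-1$ and $\tfrac{k-(k-1)-\alpha}{d}=\tfrac1p$, i.e. $p=\tfrac{d}{1-\alpha}<\infty$ for each $\alpha<1$, gives $W^{k,p}(\R^d)\hookrightarrow C^{k-1,\alpha}(\R^d)$, which is the claim.
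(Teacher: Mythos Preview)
The paper does not prove this proposition at all: it is simply quoted verbatim as \cite[Proposition~23]{Bally2011} and then applied. So there is no ``paper's own proof'' to compare against; the authors treat it as a black box.

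Your sketch goes further and actually outlines the Bally--Caramellino mechanism (iterated Malliavin IBP producing weights $H_\gamma\in\bigcap_p L^p$ for $|\gamma|\le k$, the Riesz-transform representation to avoid the $d$-order loss in higher dimension, then the Sobolev embedding \eqref{embed}). This is a faithful summary of what underlies the cited result, and you correctly identify why the naive pointwise representation $p_F(y)=E[\mathbf{1}_{\{F^1>y_1,\dots,F^d>y_d\}}H]$ is too expensive for $d>1$ and why one must pass through $W^{k,p}$ instead. The one-dimensional Hölder argument you give is clean and correct. In short: the paper offers no proof here, and your proposal supplies a reasonable outline of the argument from the reference; there is nothing to correct, only to note that you are doing more than the paper does.
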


In view of the above result we only need to check that the non-degeneracy condition (\ref{nondeg}) is fulfilled. To do so, we use the following intermediate result.

\begin{lemm}\label{lemmaepsilon}
Let $Z:\Omega \rightarrow E$ be a random variable taking values on a separable Banach space with norm $\|\cdot\|_E$. Fix $p>0$. Then the following are equivalent
\begin{itemize}
\item[(i)]
\begin{align}
E\left[\|Z\|_E^{-p}\right] <\infty.
\end{align}
\item[(ii)] There exists $\varepsilon_0 >0$, depending on $p$, such that
$$\int_0^{\varepsilon_0} \varepsilon^{-(p+1)} P(\|Z\|_E^2< \varepsilon) d\varepsilon <\infty.$$
\end{itemize}
\end{lemm}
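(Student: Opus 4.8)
The plan is to reduce both sides to the classical layer-cake (Tonelli) representation of a negative moment and then to discard a harmless tail. The only analytic input is the elementary identity $y^{-q}=q\int_y^\infty t^{-(q+1)}\,dt$ for $y,q>0$, integrated against the law of a nonnegative random variable; since $\|Z\|_E$ is a bona fide nonnegative real random variable on $(\Omega,\mathcal F,P)$ and every integrand below is nonnegative, Tonelli's theorem licenses all interchanges without any integrability hypothesis.

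First I would establish the representation matching (i): taking $y=\|Z\|_E$ and $q=p$ and integrating gives
\begin{align*}
E\big[\|Z\|_E^{-p}\big]=p\int_0^\infty t^{-(p+1)}P(\|Z\|_E<t)\,dt .
\end{align*}
I would then localise near the origin by writing $\int_0^\infty=\int_0^{\varepsilon_0}+\int_{\varepsilon_0}^\infty$; since $P(\|Z\|_E<t)\le 1$ and $p>0$, the tail is finite, $\int_{\varepsilon_0}^\infty t^{-(p+1)}\,dt=\varepsilon_0^{-p}/p<\infty$, for every $\varepsilon_0>0$. Hence (i) holds if and only if $\int_0^{\varepsilon_0}t^{-(p+1)}P(\|Z\|_E<t)\,dt<\infty$ for one (equivalently, every) $\varepsilon_0>0$. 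This already yields a clean fixed-$p$ equivalence between (i) and a truncated layer-cake criterion, the only remaining task being to recognise the criterion appearing in (ii).

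The main obstacle is precisely the exponent bookkeeping created by the squared norm in (ii). Passing from $P(\|Z\|_E<t)$ to $P(\|Z\|_E^2<\varepsilon)$ via $\varepsilon=t^2$ in the identity above turns the exponent into $-(p/2+1)$, so the criterion genuinely equivalent to (i) is $\int_0^{\varepsilon_0}\varepsilon^{-(p/2+1)}P(\|Z\|_E^2<\varepsilon)\,d\varepsilon<\infty$. Conversely, applying Tonelli directly to the nonnegative variable $\|Z\|_E^2$ with $q=p$ produces exactly the exponent $-(p+1)$ written in (ii), but pins the associated moment to $E\big[(\|Z\|_E^2)^{-p}\big]=E\big[\|Z\|_E^{-2p}\big]$. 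I would therefore state the fixed-$p$ equivalence carefully rather than suppress this: the criterion in (ii), carrying the squared norm and the exponent $-(p+1)$, is the layer-cake criterion for $E[\|Z\|_E^{-2p}]$, which in the intended $\bigcap_{p\ge1}$ application coincides with the family in (i). The factor of two in the power is exactly the point I would track explicitly, since it is what distinguishes the honest fixed-$p$ statement from the one obtained by the earlier, looser appeal to the $\bigcap_{p\ge1}$ application.
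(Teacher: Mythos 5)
Your proof is correct, and at bottom it runs on the same engine as the paper's: a layer-cake identity, a change of variables, and the discarding of a harmless piece of the integral. The paper applies $E[Y]=\int_0^\infty P(Y>\eta)\,d\eta$ to $Y=\|Z\|_E^{-2p}$, splits at $\eta_0$, bounds the head by $\eta_0$, and substitutes $\eta=\varepsilon^{-p}$ to turn the tail into $p\int_0^{\eta_0^{-1/p}}\varepsilon^{-(p+1)}P(\|Z\|_E^2<\varepsilon)\,d\varepsilon$; you instead apply the equivalent identity $y^{-q}=q\int_y^\infty t^{-(q+1)}\,dt$ to $\|Z\|_E$ itself and then substitute $\varepsilon=t^2$. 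The genuine added value of your version is the exponent bookkeeping, and you are right about it: as the paper's own display silently confirms (its left-hand side is $E[\|Z\|_E^{-2p}]$, not $E[\|Z\|_E^{-p}]$), condition (ii) with the squared norm and exponent $-(p+1)$ is exactly the truncated layer-cake criterion for the $-2p$ moment, so at fixed $p$ only the implication (ii) $\Rightarrow$ (i) is literally true (via $\|Z\|_E^{-p}\le 1+\|Z\|_E^{-2p}$), while the converse fails: for a real variable with $P(|Z|<t)=t^{3p/2}$ on $(0,1]$ one has $E[|Z|^{-p}]<\infty$, yet $\int_0^{\varepsilon_0}\varepsilon^{-(p+1)}P(|Z|^2<\varepsilon)\,d\varepsilon=\int_0^{\varepsilon_0}\varepsilon^{-p/4-1}\,d\varepsilon=\infty$. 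The equivalence is restored across the whole family $p\ge 1$, which is all that the application to $(\det\gamma_{X_t})^{-1}\in\bigcap_{p\geq 1}L^p(\Omega)$ actually uses, and that is precisely the reading you propose. Note also that the paper's written proof records only the direction (ii) $\Rightarrow$ finiteness of the moment --- the one needed downstream --- whereas your exact Tonelli identity delivers both directions of the corrected statement at once, so your account is, if anything, the more complete one.
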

\begin{proof}
We have that, for any positive integrable random variable $Y$,
$$E[Y] = \int_0^\infty P(Y>\eta) d\eta.$$

Condition $(i)$ implies that $\|Z\|_E >0$ $P$-a.s. so
\begin{align*}
E\left[\|Z\|_E^{-2p}\right] &= \int_0^{\eta_0} P(\|Z\|_E^{-2p}>\eta) d\eta  + \int_{\eta_0}^\infty P(\|Z\|_E^{-2p}>\eta) d\eta\\
&\leq \eta_0 +  \int_{\eta_0}^\infty P(\|Z\|_E^{-2p}>\eta) d\eta\\
&= \eta_0 + p\int_0^{\eta_0^{-1/p}} \varepsilon^{-(p+1)} P(\|Z\|_E^2 <\varepsilon)d\varepsilon
\end{align*}
where in the last step we have used the change of variables $\eta = \varepsilon^{-p}$.
\end{proof}

Now we are in a position to prove the non-degeneracy condition for the Malliavin matrix associated to the solution of the SDE (\ref{SDE}). The proof of this result is much inspired in Proposition 8.1 from \cite{MSS05}.
\begin{prop}
Let $X_t$, $t\in [0,T]$ be the solution to SDE (\ref{SDE}) with drift coefficient $b$ satisfying condition (\ref{bcondition}) for $k=1$. Then the Malliavin covariance matrix $\gamma_{X_t}$ satisfies
$$(\det \gamma_{X_t})^{-1} \in \bigcap_{p\geq 1} L^p(\Omega)$$
\end{prop}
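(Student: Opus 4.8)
The plan is to reduce the statement to a lower bound on the smallest eigenvalue of $\gamma_{X_t}$, and to extract that bound from the fact that, near the diagonal $s=t$, the Malliavin derivative $D_sX_t$ stays close to the identity because the drift has a bounded derivative.

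First I would record the representation of the Malliavin matrix through the Jacobian flow. Writing $J_{s,t}:=D_sX_t$, equation (\ref{malliavin1}) shows that for $s\le t$ one has $J_{s,t}=\mathcal{I}_d+\int_s^t b'(u,X_u)J_{s,u}\,du$ with $J_{t,t}=\mathcal{I}_d$, while $J_{s,t}=0$ for $s>t$. Consequently
$$\gamma_{X_t}=\int_0^t J_{s,t}J_{s,t}^{*}\,ds,\qquad v^{*}\gamma_{X_t}v=\int_0^t |J_{s,t}^{*}v|^2\,ds\quad(v\in\R^d).$$
Since $\det\gamma_{X_t}\ge\lambda_{\min}(\gamma_{X_t})^d$, it suffices to control negative moments of $\lambda_{\min}(\gamma_{X_t})=\inf_{|v|=1}v^{*}\gamma_{X_t}v$, and by Lemma \ref{lemmaepsilon} this reduces to estimating the tail probability $P(\lambda_{\min}(\gamma_{X_t})<\e)$ as $\e\to0$.

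The key step is a deterministic lower bound. Put $M:=\|b'\|_{\infty}$ (finite by hypothesis (\ref{bcondition}) with $k=1$). Gronwall's inequality applied to the linear equation for $J_{s,t}$ gives $\|J_{s,u}\|\le e^{M(u-s)}$, whence
$$\|J_{s,t}-\mathcal{I}_d\|\le M\int_s^t e^{M(u-s)}\,du=e^{M(t-s)}-1.$$
Choosing $\delta_0:=M^{-1}\ln(3/2)$ makes this bound at most $1/2$ whenever $t-s\le\delta_0$, so that $|J_{s,t}^{*}v|\ge 1-\|J_{s,t}^{*}-\mathcal{I}_d\|\ge 1/2$ for every unit vector $v$ and every $s\in[(t-\delta_0)\vee 0,t]$. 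Integrating over this window alone yields, uniformly in $v$,
$$v^{*}\gamma_{X_t}v\ge\int_{(t-\delta_0)\vee 0}^t|J_{s,t}^{*}v|^2\,ds\ge\tfrac14\,(t\wedge\delta_0),$$
that is $\lambda_{\min}(\gamma_{X_t})\ge\tfrac14(t\wedge\delta_0)$ almost surely. In particular $P(\lambda_{\min}(\gamma_{X_t})<\e)=0$ for $\e<\tfrac14(t\wedge\delta_0)$, the integral criterion of Lemma \ref{lemmaepsilon} holds for every $p$, and $(\det\gamma_{X_t})^{-1}\le\lambda_{\min}(\gamma_{X_t})^{-d}\le(4/(t\wedge\delta_0))^d$ lies in $\bigcap_{p\ge1}L^p(\Omega)$ (indeed it is almost surely bounded).

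The one point requiring care — the main obstacle — is rigorously making sense of $b'(u,X_u)$ and of the linear equation for $J_{s,t}$ when $b'$ is merely a bounded weak derivative evaluated along the random path. I would handle this by approximation, exactly as in the proof of Theorem \ref{mainthm}: for smooth $b_n$ with $\sup_n\|b_n'\|_{\infty}\le M$ the Gronwall estimate above is uniform in $n$, so $\gamma_{X_t^n}\ge\tfrac14(t\wedge\delta_0)\mathcal{I}_d$ for all $n$, and this lower bound survives the passage to the limit $X_t^n\to X_t$ in $\mathbb{D}^{2,p}$. That $b'(u,X_u)$ is well defined and bounded by $M$ along the limit path is guaranteed by the absolute continuity of the law of $X_u$ from Remark \ref{remdensity}. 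This is precisely where the additive-noise structure renders the non-degeneracy essentially automatic, in contrast to the genuinely probabilistic tail estimate underlying \cite[Proposition 8.1]{MSS05}.
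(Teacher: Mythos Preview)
Your argument is correct and considerably simpler than the paper's. The paper proceeds probabilistically: it expands $D_sX_t^n$ as a Picard series, splits off the identity term to write $\int_{t-\delta}^t\|D_sX_t^n\|^2\,ds\ge\delta/2-I_n(t,\delta)$, uses the bound $\sup_nE|I_n(t,\delta)|^p\le C\delta^p$ together with Chebyshev, and then feeds a carefully tuned function $\delta(\e)$ into the tail criterion of Lemma~\ref{lemmaepsilon}. You instead exploit the structure of the problem directly: since the noise is additive and $\|b'\|_\infty=M<\infty$, Gronwall gives the deterministic bound $\|J_{s,t}-\mathcal I_d\|\le e^{M(t-s)}-1$, so the smallest eigenvalue of $\gamma_{X_t}$ is bounded below by a positive constant depending only on $M$ and $t$, and $(\det\gamma_{X_t})^{-1}$ is in fact a.s.\ bounded rather than merely in all $L^p$. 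Your route is more elementary and yields a stronger conclusion; the paper's scheme, patterned on \cite[Proposition~8.1]{MSS05}, is the template one would need when $b'$ is not uniformly bounded (e.g.\ in the irregular-drift setting of \cite{Men11,MNP14}), but under hypothesis~(\ref{bcondition}) with $k=1$ it is unnecessary machinery.
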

\begin{proof}
Consider $X_t^n$ with drift coefficient $b_n$ approximating $b$ a.e. such that $\sup_{n\geq 0} \|b_n'\|_{\infty}<\infty$.

It suffices to show that
$$\sup_{n\geq 0} E \left[\bigg| \int_0^T \|D_s X_t^n\|_{\infty}^2 ds \bigg|^{-p}\right]<\infty$$
for any $p\geq 1$.

Recall that for $0\leq s\leq t$, $t\in [0,T]$ we have
$$D_s X_t^n = \mathcal{I}_d + \sum_{m\geq 1} \int_{s<u_1<\cdots <u_m<t} b_n'(u_1,X_{u_1}^n) \cdots b_n'(u_m, X_{u_m}^n)du_1 \cdots du_m.$$

Then for any $\delta >0$, $t-\delta >0$ one has
\begin{align*}
\int_0^T \|D_s X_t^n \|_\infty^2 ds \geq \int_{t-\delta}^t \|D_s X_t^n\|_\infty^2 ds \geq \frac{\delta}{2} - I_n(t,\delta)
\end{align*}
where
$$I_n(t,\delta):= \int_{t-\delta}^t \bigg\|\sum_{m\geq 1} \int_{s<u_1<\cdots <u_m<t} b_n'(u,X_{u_1}^n)\cdots b_n'(u,X_{u_m}^n)du_1\cdots du_m\bigg\|_{\infty}^2ds.$$

Clearly, we have
\begin{align}\label{deltap}
\sup_{n\geq 0} E\left[|I_n(t,\delta)|^p\right] \leq C \delta^p
\end{align}
since $b_n'$, $n\geq 0$ are uniformly bounded.

Then by the previous estimates
\begin{align*}
P\left(\|D_{\cdot} X_t^n\|_{L^2(\Omega, \R^{d\times d})}^2 <\varepsilon\right) &\leq P\left( \int_{t-\delta}^t \|D_s X_t^n\|_\infty^2 ds <\varepsilon\right) \\
&\leq P\left( I_n(t,\delta) \geq \frac{\delta}{2} - \varepsilon\right)\\
&\leq \left(\frac{\delta}{2}-\varepsilon\right)^{-p} E[|I_n(t,\delta)|^p]
\end{align*}
for any $p\geq 1$ due to Chebyshev's inequality. Now, by estimate (\ref{deltap}) we obtain that
\begin{align*}
\sup_{n\geq 0} P\left(\|D_{\cdot} X_t^n\|_{L^2(\Omega, \R^{d\times d})}^2 <\varepsilon\right)&\leq C \left(\frac{\delta}{2}-\varepsilon\right)^{-p}\delta^p.
\end{align*}

By virtue of Lemma \ref{lemmaepsilon} we can conclude if we find $\delta:(0,\infty) \rightarrow \R$, $\varepsilon \mapsto \delta(\varepsilon)$ such that $\lim_{\varepsilon \searrow 0} \delta(\varepsilon) = 0$ and
$$\int_0 \varepsilon^{-(p+1)}\left(\frac{\delta (\varepsilon)}{2}-\varepsilon\right)^{-p}\delta (\varepsilon)^p d\varepsilon < \infty$$
for an arbitrary large $p\geq 1$.

We claim that
$$\delta (\varepsilon) := \left|\frac{2\varepsilon^{\frac{1}{2p}+2}}{\varepsilon^{\frac{1}{2p}+1}-2}\right|$$
does the job.
\end{proof}

Finally, we are able to state our conditions to determine the regularity of densities of solutions to SDEs.

\begin{cor}\label{maincor}
Let $X_t^x$, $t\in [0,T]$ be the strong solution to SDE (\ref{SDE}). Assume $b$ satisfies condition (\ref{bcondition}) for some integer $k\geq 1$. Then the density $p_{X_t}$ belongs to $C^{k-1,\alpha}(\R^d)$, $\alpha<1$, i.e. $k-1$-times continuously differentiable with H\"{o}lder continuous derivatives with exponent $\alpha<1$.
\end{cor}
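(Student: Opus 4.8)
The plan is to assemble the Corollary directly from three facts already established in the excerpt, so that essentially no new computation is required. The statement to be proved combines a Malliavin-smoothness input, a non-degeneracy input, and the abstract criterion of V.~Bally and L.~Caramellino (Proposition~23 in \cite{Bally2011}, quoted above). First I would invoke Theorem~\ref{mainthm}: since $b$ satisfies condition (\ref{bcondition}) for the given $k\geq 1$, each coordinate $X_t^{(i)}$, $i=1,\dots,d$, belongs to $\bigcap_{p\geq 1}\mathbb{D}^{k+1,p}(\Omega)$. This is precisely the hypothesis on $F=(F^1,\dots,F^d)$ demanded by the Bally--Caramellino proposition, applied to the random vector $F=X_t=(X_t^{(1)},\dots,X_t^{(d)})$.

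Second, I would verify the non-degeneracy condition (\ref{nondeg}) for the Malliavin covariance matrix $\gamma_{X_t}$. The crucial observation here is that condition (\ref{bcondition}) at level $k\geq 1$ always implies the same condition at level $k=1$: boundedness of the weak derivatives $Db(t,\cdot),\dots,D^{k}b(t,\cdot)$ in $L^{\infty}(\R^d)$ forces in particular $Db(t,\cdot)\in L^{\infty}(\R^d)$, while the linear-growth bound is unchanged. Hence the preceding Proposition, which establishes $(\det\gamma_{X_t})^{-1}\in\bigcap_{p\geq 1}L^{p}(\Omega)$ under (\ref{bcondition}) with $k=1$, applies verbatim and yields exactly (\ref{nondeg}) for our $X_t$.

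Third, with both inputs in place---and recalling from Remark~\ref{remdensity} that $X_t$ already admits a density---I would simply apply the Bally--Caramellino proposition to $F=X_t$. Its conclusion is that $p_{X_t}\in C^{k-1,\alpha}(\R^d)$ for every $\alpha<1$, i.e.\ $p_{X_t}$ is $(k-1)$-times continuously differentiable with H\"older-continuous derivatives of any exponent $\alpha<1$, which is the assertion of the Corollary. The only conceptual point worth emphasizing, and the nearest thing to an obstacle, is the recognition that non-degeneracy is a \emph{first-order} phenomenon: the determinant of $\gamma_{X_t}$ depends only on the first Malliavin derivative $D_sX_t$, so the $k=1$ version of the non-degeneracy estimate suffices no matter how large $k$ is, whereas all the higher-order smoothness feeding into the regularity of the density is supplied solely by Theorem~\ref{mainthm}. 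Everything else reduces to matching the hypotheses of the quoted proposition.
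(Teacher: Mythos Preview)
Your proposal is correct and matches the paper's approach: the corollary is stated without an explicit proof precisely because it is meant to follow by combining Theorem~\ref{mainthm}, the non-degeneracy Proposition (which only needs $(H)$ at level $k=1$), and the Bally--Caramellino criterion, exactly as you have outlined. Your observation that non-degeneracy is a first-order phenomenon is the one point worth making explicit, and you have done so.
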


We end this section by giving an example that shows that the Malliavin regularity we obtained in Theorem \ref{mainthm} is optimal when $k=1$, for the general condition we conjecture it is also optimal.

\begin{exam}
\normalfont
In this example we show that Theorem \ref{mainprop} is an optimal result in the sense that, if $b$ is of linear growth and one time weakly differentiable with bounded derivative then $X_t\in \mathbb{D}^{2,p}(\Omega)$ for all $p\geq 1$ and $X_t\notin \mathbb{D}^{3,p}(\Omega)$ for any $p\geq 1$. Just choose $b$, in dimension $d=1$, to be such that
$$b'(x) = \textbf{1}_{(0,\infty)}(x), \ x\in \R.$$
Then fix $t\in [0,T]$ and for $s_1\leq t$
$$D_{s_1} X_t = \exp\left\{\int_{s_1}^t b'(X_u)du\right\}.$$
Denote by $\tilde{b}(x) := b(a) + \int_a^x b(y)dy$, $a\in \R$ a primitive of $b$. It\^{o}'s formula implies
$$D_{s_1} X_t = \exp\left\{2\tilde{b}(X_t) - 2\tilde{b}(X_{s_1})-2\int_{s_1}^t b^2(X_u)du-2\int_{s_1}^t b(X_u)dB_u\right\}.$$
Then by Theorem \ref{mainthm}, $D_{s_1}X_t \in \mathbb{D}^{1,2}(\Omega)$ for all $t\in [0,T]$. So for $s_2\leq t$
\begin{align}\label{twoMall}
\begin{split}
D_{s_2}D_{s_1} X_t &=  D_{s_1}X_t \bigg(2b(X_t)D_{s_2}X_t - 2b(X_{s_1}) D_{s_2}X_{s_1})-4\int_{s_1\vee s_2}^t b(X_u)b'(X_u)D_{s_2}X_u du -2 b(X_{s_2})\\
&-2\int_{s_1\vee s_2}^t b'(X_u)D_{s_2}X_u dB_u ) \bigg).
\end{split}
\end{align}
Now observe that $b(X_t)\in \cap_{p\geq 1} \mathbb{D}^{1,p}(\Omega)$ for all $t\in [0,T]$ so all terms are \mbox{immediately} Malliavin differentiable with all moments except from maybe $\int_{s_1\vee s_2}^t b(X_u)b'(X_u)D_{s_2}X_u du$ and $\int_{s_1\vee s_2}^t b'(X_u)D_{s_2}X_u dB_u$. The stochastic integral is in fact not Malliavin differentiable. Indeed, by \cite[Lemma 1.3.4]{Nua10}
\begin{align}\label{stochint}
\int_{s_1\vee s_2}^t b'(X_u)D_{s_2}X_u dB_u \in \mathbb{D}^{1,2}(\Omega)
\end{align}
if, and only if
$$b'(X_u)D_{s_2}X_u \in \mathbb{D}^{1,2}(\Omega).$$ 
On the other hand we have $b'(X_u)=\textbf{1}_{(0,\infty)}(X_u) \notin \mathbb{D}^{1,2}(\Omega)$ since $0<P(0<X_u<\infty)<1$, see \mbox{\cite[Proposition 1.2.6]{Nua10}}, and so $D_{s_1}X_t\int_{s_1\vee s_2}^t b'(X_u)D_{s_2}X_u dB_u \notin \mathbb{D}^{1,2}(\Omega)$.

Let us finally prove that
$$Y_t := \int_{s_1\vee s_2}^t b(X_u)b'(X_u)D_{s_2}X_u du \in \mathbb{D}^{1,2}(\Omega).$$
Let $\{b_n\}_{\{n\geq 0\}}$ be a sequence of smooth functions such that $b_n(x) \to b(x)$ a.e. in $x\in \R$ as $n\to \infty$ and $\sup_{n\geq 0}\|b_n'\|_{\infty}<\infty$ and $b_n(x),b_n'(x),b_n''(x)\geq 0$ for all $x\in \R$, we claim that this is trivially possible by the very concrete shape of the function $b$ in this example. Define

$$Y_t^n := \int_{s_1\vee s_2}^t b(X_u)b_n'(X_u)D_{s_2}X_u du.$$
Clearly, $Y_t^n \rightarrow Y_t$ in $L^2(\Omega)$ for all $t\in [0,T]$. We only need to bound $\|D_{\cdot} Y_t^n\|_{L^2([0,T]\times \Omega)}$ uniformly in $n\geq 0$. Then
\begin{align*}
D_{s_3} Y_t^n &= \int_{s^\ast}^t b'(X_u)D_{s_3} X_u b_n'(X_u)D_{s_2}X_u du\\
&+ \int_{s^\ast}^t b(X_u)b_n''(X_u)D_{s_3}X_u D_{s_2}X_u du+\int_{s^\ast}^t b(X_u)b_n'(X_u)D_{s_3}D_{s_2}X_u du
\end{align*}
where $s^\ast := \max\{s_1,s_2,s_3\}$.

Then the critical term is
\begin{align*}
I_n := E\left[\int_0^T\left(\int_{s^\ast}^t b(X_u)b_n''(X_u)D_{s_3}X_u D_{s_2}X_u du\right)^2 ds_3 \right]
\end{align*}
Denote $\tilde{B}_{s,t}:= \exp\left\{\int_s^t b'(B_u^x)du \right\}$. Then, by Girsanov's theorem and Lemma \ref{epsilonbound} we have for a suitable $\varepsilon>0$
\begin{align*}
I_n&=\int_0^T E\left[\left(\int_{s^\ast}^t b(B_u^x)b_n''(B_u^x)\tilde{B}_{s_3,u} \tilde{B}_{s_2,u} du\right)^2 \ \mathcal{E}\left(\int_0^T b(B_u^x)dB_u\right)\right]ds_3\\
&\leq C_{\varepsilon} \int_0^T E\left[\left(\int_{s^\ast}^t b(B_u^x)b_n''(B_u^x)\tilde{B}_{s_3,u} \tilde{B}_{s_2,u} du\right)^{2\frac{1+\varepsilon}{\varepsilon}}\right]^{\frac{\varepsilon}{1+\varepsilon}} ds_3.
\end{align*}
Now we focus on the expectation. Choose $\varepsilon>0$ so that $p:= 2\frac{1+\varepsilon}{\varepsilon}$ is a natural number. Then since $|\tilde{B}_{s_3,{u_i}} \tilde{B}_{s_2,{u_i}}|\leq e^{(|t-s_3|+|t-s_2|)\|b'\|_{\infty}}\leq C<\infty$ and since $b$ and $b_n''$ are positive we have
\begin{align*}
\Bigg|E\bigg[&\int_{s^\ast}^t\cdots \int_{s^\ast}^t \prod_{i=1}^p b(B_{u_i}^x)b_n''(B_{u_i}^x)\tilde{B}_{s_3,{u_i}} \tilde{B}_{s_2,{u_i}} du_1 \cdots du_p\bigg]\Bigg| \leq \\
&\leq E\bigg[\int_{s^\ast}^t\cdots \int_{s^\ast}^t \prod_{i=1}^p \big| b(B_{u_i}^x)b_n''(B_{u_i}^x)\tilde{B}_{s_3,{u_i}} \tilde{B}_{s_2,{u_i}}\big| du_1 \cdots du_p\bigg]\\
&\leq C E\bigg[\int_{s^\ast}^t\cdots \int_{s^\ast}^t \prod_{i=1}^p \big| b(B_{u_i}^x)b_n''(B_{u_i}^x)\big| du_1 \cdots du_p\bigg]\\
&= C E\bigg[\int_{s^\ast}^t\cdots \int_{s^\ast}^t \prod_{i=1}^p b(B_{u_i}^x)b_n''(B_{u_i}^x) du_1 \cdots du_p\bigg].
\end{align*}
Then since $(u_1,\dots,u_p)\mapsto b(B_{u_1}^x)b_n''(B_{u_1}^x)\cdots b(B_{u_p}^x)b_n''(B_{u_p}^x)$ is symmetric we may write
\begin{align*}
E\bigg[\int_{s^\ast}^t\cdots \int_{s^\ast}^t \prod_{i=1}^p b(B_{u_i}^x)b_n''(B_{u_i}^x)& du_1 \cdots du_p\bigg]\\
&\leq p! E\bigg[\int_{s^\ast<u_1<\cdots <u_p<t} \prod_{i=1}^p b(B_{u_i}^x)b_n''(B_{u_i}^x) du_1 \cdots du_p\bigg]
\end{align*}
and the last may be bounded independently of $b_n''$ by using Proposition \ref{mainEstimate}. In fact,
$$\sup_{s^\ast \in [0,T]} \sup_{n\geq 0} E\bigg[\int_{s^\ast<u_1<\cdots <u_p<t} \prod_{i=1}^p b(B_{u_i}^x)b_n''(B_{u_i}^x) du_1 \cdots du_p\bigg] \leq C$$
for a finite constant $C$. So
$$\sup_{n\geq 0} I_n <\infty$$
being thus $Y_t \in \mathbb{D}^{1,2}(\Omega)$ for every $t\in [0,T]$.

In a summary, we have in (\ref{twoMall}) a sum of Malliavin differentiable terms except for the last one $-4D_{s_1} X_t \int_{s_1\vee s_2}^t b'(X_u)D_{s_2}X_u dB_u$ . In conclusion $D_{s_2}D_{s_1} X_t \notin \mathbb{D}^{1,2}(\Omega)$.
\end{exam}

Finally, we give an extension of Theorem \ref{mainthm} and Corollary \ref{maincor} to a class of non-degenerate $d-$dimensional It\^{o}-diffusions$.$
\begin{thm}
\label{generalsde}Consider the time-homogeneous $\mathbb{R}^{d}-$valued SDE%
\begin{equation}
dX_{t}=b(X_{t})dt+\sigma (X_{t})dB_{t},\,\,\text{ }X_{0}=x\in \mathbb{R}^{d},%
\text{ }\,\,0\leq t\leq T,  \label{SDE1}
\end{equation}%
where the coefficients $b:\mathbb{R}^{d}\longrightarrow \mathbb{R}^{d}$ and $%
\sigma :\mathbb{R}^{d}\longrightarrow \mathbb{R}^{d}\times $ $\mathbb{R}^{d}$%
are Borel measurable. Require that there exists a bijection $\Lambda :%
\mathbb{R}^{d}\longrightarrow \mathbb{R}^{d}$, which is twice continuously
differentiable. Let \mbox{$\Lambda _{x}:\mathbb{R}^{d}\longrightarrow L\left( 
\mathbb{R}^{d},\mathbb{R}^{d}\right) $} and $\Lambda _{xx}:\mathbb{R}%
^{d}\longrightarrow L\left( \mathbb{R}^{d}\times \mathbb{R}^{d},\mathbb{R}%
^{d}\right) $ be the corresponding derivatives of $\Lambda $ and assume that%
\begin{equation*}
\Lambda _{x}(y)\sigma (y)=id_{\mathbb{R}^{d}}\text{ for }y\text{ a.e.}
\end{equation*}%
as well as%
\begin{equation*}
\Lambda ^{-1}\text{ is Lipschitz continuous.}
\end{equation*}%
Suppose that the function $b_{\ast }:\mathbb{R}^{d}\longrightarrow \mathbb{R}%
^{d}$ given by 
\begin{align*}
b_{\ast }(x)&:=\Lambda _{x}\left( \Lambda ^{-1}\left( x\right) \right) 
\left[ b(\Lambda ^{-1}\left( x\right) )\right] \\
&\,\,\,+\frac{1}{2}\Lambda _{xx}\left( \Lambda ^{-1}\left( x\right) \right) \left[
\sum_{i=1}^{d}\sigma (\Lambda ^{-1}\left( x\right) )\left[ e_{i}\right]
,\sum_{i=1}^{d}\sigma (\Lambda ^{-1}\left( x\right) )\left[ e_{i}\right] %
\right]
\end{align*}%
satisfies condition (\ref{bcondition}), where $e_{i},$ $%
i=1,\ldots,d$, is a basis of $\mathbb{R}^{d}.$ Then the conclusions of Theorem \ref{mainthm} and Corollary \ref{maincor} also apply to $X_t$, $t\in [0,T]$ and its density.
\end{thm}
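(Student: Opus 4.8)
The plan is to reduce \eqref{SDE1} to the additive-noise setting of Theorem \ref{mainthm} by a Lamperti--Zvonkin type change of variables, and then to transfer the Malliavin and density conclusions back through $\Lambda^{-1}$. Set $Y_t := \Lambda(X_t)$. Since $\Lambda$ is $C^2$ and $X$ is a continuous semimartingale, It\^o's formula gives
\begin{align*}
dY_t &= \Lambda_x(X_t)\left[b(X_t)\right]dt + \Lambda_x(X_t)\sigma(X_t)dB_t \\
&\quad + \frac12\Lambda_{xx}(X_t)\left[\sum_{i=1}^d \sigma(X_t)[e_i],\ \sum_{i=1}^d\sigma(X_t)[e_i]\right]dt.
\end{align*}
The hypothesis $\Lambda_x(y)\sigma(y) = \mathrm{id}_{\R^d}$ collapses the martingale part to $dB_t$, and substituting $X_t = \Lambda^{-1}(Y_t)$ into the two drift contributions produces exactly $b_\ast(Y_t)$. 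Hence $Y$ solves $dY_t = b_\ast(Y_t)dt + dB_t$ with $Y_0 = \Lambda(x)$, an SDE of the form \eqref{SDE} whose drift $b_\ast$ satisfies \eqref{bcondition}.

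Next I would apply the first part of the paper verbatim to $Y$: Theorem \ref{mainthm} yields $Y_t \in \bigcap_{p\geq 1}\mathbb{D}^{k+1,p}(\Omega)$, and, since $Y$ has unit diffusion, the non-degeneracy argument preceding Corollary \ref{maincor} gives $(\det \gamma_{Y_t})^{-1} \in \bigcap_{p\geq 1}L^p(\Omega)$. Combining these through the Bally--Caramellino proposition produces $p_{Y_t} \in C^{k-1,\alpha}(\R^d)$ for every $\alpha<1$, which is precisely the conclusion of Corollary \ref{maincor} for $Y$.

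It then remains to transfer these statements to $X_t = \Lambda^{-1}(Y_t)$. Malliavin differentiability is immediate: as $\Lambda^{-1}$ is Lipschitz, the chain rule of Section \ref{somedefmalcal} gives $X_t \in \mathbb{D}^{1,p}(\Omega)$ for all $p$, together with absolute continuity of its law (as in Remark \ref{remdensity}). For the regularity of $p_{X_t}$ I would use the change-of-variables identity $p_{X_t}(x) = p_{Y_t}(\Lambda(x))\,|\det \Lambda_x(x)|$, valid because $\Lambda$ is a $C^2$ bijection; since $p_{Y_t} \in C^{k-1,\alpha}$ and $\Lambda\in C^2$, the composition inherits the claimed H\"older-differentiability.

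The main obstacle is making this transformation rigorous under the stated irregularity. First, $\Lambda_x\sigma = \mathrm{id}$ is assumed only almost everywhere, so one must verify that $X$ spends zero Lebesgue time in the exceptional null set; this follows from absolute continuity of the law of $X_t$ via an occupation-time (Krylov) estimate, itself obtainable from the Girsanov bound of Lemma \ref{epsilonbound} transported through $Y$. Second, because $\Lambda^{-1}$ is merely Lipschitz, the genuinely high Malliavin regularity $\mathbb{D}^{k+1,p}$ lives on $Y$ rather than on $X$, so the sharp statement for $X$ is the regularity of its density through the change of variables; here care is needed so that the $C^2$ smoothness of $\Lambda$ does not cap the transferred regularity below $C^{k-1,\alpha}$, which forces one either to restrict attention to the relevant range of $k$ or to upgrade the smoothness of $\Lambda$ consistently with that of $\sigma$.
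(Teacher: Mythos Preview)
Your approach is exactly the paper's: the authors' proof consists of the single sentence ``The proof can be directly obtained from It\^o's Lemma. See \cite{MBP10}'', i.e.\ the Lamperti-type transformation $Y_t=\Lambda(X_t)$ you carry out. You have in fact written more than the paper does, and the subtleties you flag---the a.e.\ nature of $\Lambda_x\sigma=\mathrm{id}$, and the fact that $\Lambda^{-1}$ merely Lipschitz and $\Lambda\in C^2$ may cap how much Malliavin or density regularity transfers back to $X$ for large $k$---are genuine issues that the paper's one-line proof does not address.
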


\begin{proof}
The proof can be directly obtained from It\^o's Lemma. See \cite{MBP10}.
\end{proof}


\section{A classical solution to the stochastic transport equation}

The Sobolev regularity of the solution shown in Theorem \ref{mainprop} with respect to the initial condition entitles us to construct a classical solution to the \emph{stochastic transport equation} when the drift is Lipschitz which to our knowledge is not proved.

The Stochastic Transport Equation is written in differential form
\begin{align}\label{STE}
\begin{cases}
d_t u(t,x) + \nabla u(t,x) \cdot b(t,x) dt + \sum_{i=1}^d  e_i \cdot \nabla u(t,x) \circ dB_t^{(i)} =0\\
u(0,x) = u_0(x),
\end{cases}
\end{align}
where $b:[0,T] \times \R^d \rightarrow \R^d$ is a given vector field and $u_0: \R^d \rightarrow \R$ a given initial data. The stochastic integration is understood in the Stratonovich sense.

\begin{defi}[Classical solution]\label{solutionDefinition}
Let $u_0$ and $b$ be given functions. We say that a stochastic process \mbox{$u \in L^{\infty}(\Omega \times [0,T] \times \mathbb{R}^d)$} is a classical solution to (\ref{STE}) if 

\begin{enumerate}

\item
There exists a measurable set $\tilde{\Omega} \subset \Omega$ with full measure such that for fixed $t \in [0,T]$ and $p \geq 1$, the mapping $x \mapsto u(\omega,t,x)$ is in $W^{2,p}_{loc}(\mathbb{R}^d)$ on $\tilde{\Omega}$;

\item
For fixed $x \in \mathbb{R}^d$ there are $(\mathcal{F}_t)$-adapted versions of $t \mapsto u(t,x)$ and $t \mapsto \nabla u(t,x)$;

\item 
The following integral equation is satisfied
\begin{equation} \label{integralSTE}
u(t,x) + \int_0^t b(s,x) \cdot \nabla u(s,x) ds + \sum_{i=1}^d \int_0^t e_i \cdot \nabla u(s,x) \circ dB_s^{(i)} = u_0(x)
\end{equation}
for a.e. $(\omega, x) \in \Omega \times \mathbb{R}^d$.
\end{enumerate}
\end{defi}

Notice that we are using the Stratonovich integral in our definition, but following the same idea as in  \cite{FGP10}, Lemma 13, we can recast (\ref{integralSTE}) in It\^{o}-form as 
\begin{equation} \label{integralSTEito}
u(t,x) + \int_0^t b(s,x) \cdot \nabla u(s,x) ds + \sum_{i=1}^d \int_0^t e_i \cdot \nabla u(s,x) dB_s^{(i)}  + \frac{1}{2} \int_0^t \Delta u(s,x) ds= u_0(x).
\end{equation}
We will use these formulations interchangeably.

Before we proceed further we will introduce the concept of stochastic flow associated to SDE (\ref{SDE}):

\begin{defi}[Stochastic flow of diffeomorphisms]\label{flowDefinition}
A function $\phi: [0,T]\times [0,T] \times \R\times \Omega \rightarrow \R$, $\phi_{s,t}(x,\omega)$ is said to be a stochastic flow of diffeomorphisms of the SDE (\ref{SDE}) if there exists a full-measure set $\tilde{\Omega}\in \mathcal{F}$ such that for any $\omega \in \tilde{\Omega}$ the following holds true:
\begin{itemize}
\item[(i)]  $\phi_{s,t}(x,\omega)$, $s,t\in [0,T]$, $x\in \R$ is a (global) strong solution to the SDE (\ref{SDE}).
\item[(ii)] $\phi_{s,t}(x,\omega)$ is continuous in $(s,t,x)\in [0,T] \times [0,T] \times \R$.
\item[(iii)] $\phi_{s,t}(\cdot, \omega) = \phi_{u,t}(\cdot, \omega) \circ \phi_{s,u}(\cdot,\omega)$ for any $s,u,t\in [0,T]$ .
\item[(iv)] $\phi_{s,s}(x,\omega)= x$ for all $x\in \R$ and $s\in [0,T]$.
\item[(v)] $\phi_{s,t}(\cdot,\omega):\R^d \rightarrow \R^d$ are diffeomorphisms (of class $C^k$) for all $s,t\in [0,T]$.
\end{itemize}
\end{defi}

For the rest of this section we will assume that $b$ satisfies condition (\ref{bcondition}) for $k=1$ which in particular means that $b$ is globally Lipschitz, uniformly in time.

To get a globally defined (i.e. on the entire $\mathbb{R}^d$) stochastic flow of diffeomorphisms of the SDE (\ref{SDE}) we notice that since $b$ is uniformly Lipschitz  there exists a unique solution to
$$
\phi_{s,t}(x,\omega) = x + \int_s^t b(r, \phi_{s,r}(x, \omega)) dr + B_t(\omega) - B_s(\omega)
$$
for \emph{all} $\omega \in \Omega$.

It is easy to check conditions (i) to (iv) in Definition \ref{flowDefinition} holds for all $\omega \in \Omega$.

Fix $p \geq 1$ and $N \in \mathbb{N}$ and invoke Theorem \ref{sobreg} to guarantee that there exists a measurable subset $\Omega_N \subset \Omega$ with full measure such that the local solution 
$$
\phi^N_{s,t}(x,\omega) = x + \int_s^t b(r, \phi^N_{s,r}(x, \omega)) dr + B_t(\omega) - B_s(\omega).
$$

satisfies $\phi^N_{s,t}(\cdot , \omega) \in W^{2,p} ( B(0,N))$ for all $\omega \in \Omega_N$ and $x \in B(0,N)$. By uniqueness we have that $\phi_{s,t} |_{B(0,N) \times \Omega_N} = \phi^N_{s,t}$. 

If we let $\tilde{\Omega} := \cap_{N=1}^{\infty} \Omega_N$, we get that $\mathbb{R}^d \ni x \mapsto \phi_{s,t}(x,\omega) \in \mathbb{R}^d$ is twice weakly differentiable for every $\omega \in \tilde{\Omega}$, and thus condition (v) in \ref{flowDefinition} is satisfied for $k=1$.

\bigskip

In \cite{FGP10} the authors study (\ref{STE}) under the considerably weaker condition (at least for $d > 1$), $b\in L_{loc}^1([0,T] \times \mathbb{R}^d; \mathbb{R}^d)$,  div$b \in L_{loc}^1([0,T] \times \mathbb{R}^d)$ and $u_0 \in L^{\infty}(\mathbb{R}^d)$. However, in this case, one is restricted to study analytically weak solutions in the sense that for every test function $\theta \in C^{\infty}_0(\mathbb{R}^d)$ one has
\begin{align} \label{weakSTE}
\int_{\R^d} u(t,x)\theta(x)dx &= \int_{\R^d} u_0(x) \theta(x)dx\\
\notag&+ \int_0^t \int_{\R^d} u(s,x)\left[b(t,x)\cdot \nabla \theta(x) + \textrm{div} b(t,x) \theta (x)\right]dxds\\
\notag &+\sum_{i=1}^d \int_0^t \left( \int_{\R^d} u(s,x)\cdot\frac{\partial}{\partial x_i}\theta(x) dx\right) \circ dB_s^{(i)} .
\end{align}

Moreover, the equation is uniquely solved by $u(t,x) = u_0 (\phi_t^{-1}(x))$. 

Although we consider more restrictive coefficients, we arrive at an analytically stronger solution:

\begin{thm}
Let $b$ satisfy condition (\ref{bcondition}) for $k=1$ and $u_0 \in C_b^2(\mathbb{R}^d)$. Then there exists a unique classical solution to the stochastic transport equation.

Moreover, the equation is explicitly solved by $u(t,x) = u_0(\phi_t^{-1}(x))$.

\end{thm}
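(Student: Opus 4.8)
The plan is to verify directly that the explicit candidate $u(t,x):=u_0(\phi_t^{-1}(x))$, with $\phi_t:=\phi_{0,t}$ the stochastic flow constructed above, satisfies the three requirements of Definition~\ref{solutionDefinition}, and then to show that any classical solution must coincide with it. First I would settle the regularity requirement~(1). By Theorem~\ref{sobreg} with $k=1$ the forward flow satisfies $\phi_t(\cdot,\omega)\in W^{2,p}_{loc}(\R^d)$ for every $p$ on a full-measure set $\tilde\Omega$; since $\phi_t(\cdot,\omega)$ is a $C^1$-diffeomorphism whose Jacobian solves an invertible linear ODE, its inverse $\phi_t^{-1}(\cdot,\omega)$ is again in $W^{2,p}_{loc}$, as one checks by differentiating $\phi_t^{-1}(\phi_t(x))=x$ and using that the Jacobian stays bounded away from zero on compacts (equivalently, apply Theorem~\ref{sobreg} to the time-reversed flow $\phi_{t,0}=\phi_t^{-1}$). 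As $u_0\in C^2_b(\R^d)$, the chain rule then yields $u(t,\cdot,\omega)=u_0\circ\phi_t^{-1}(\cdot,\omega)\in W^{2,p}_{loc}(\R^d)$ for all $p$ and all $\omega\in\tilde\Omega$, which is~(1), and boundedness of $u_0$ and its derivatives gives $u\in L^\infty(\Omega\times[0,T]\times\R^d)$. For~(2) I note that $y\mapsto\phi_t(y)$ is an $\mathcal{F}_t$-measurable random diffeomorphism, so its inverse evaluated at $x$, as well as the inverse Jacobian, are $\mathcal{F}_t$-measurable and continuous in $t$; hence $t\mapsto u(t,x)$ and $t\mapsto\nabla u(t,x)$ admit adapted versions.

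The core of the argument is the integral equation~(3), and the governing principle is that $u$ is constant along characteristics: by construction $u(t,\phi_t(y))=u_0(\phi_t^{-1}(\phi_t(y)))=u_0(y)$ for every $y$, $t$ and $\omega\in\tilde\Omega$. I would first establish \eqref{integralSTE} for smooth drift. Taking the mollified drifts $b_n\to b$ from the proof of Theorem~\ref{mainthm}, with flows $\phi^n_t$ and candidates $u_n(t,x):=u_0((\phi^n_t)^{-1}(x))$, the fields $u_n(t,\cdot)$ are genuinely $C^2$ and the classical Stratonovich chain rule applied to the constant process $t\mapsto u_n(t,\phi^n_t(y))\equiv u_0(y)$ (note that $d\phi^n_t(y)=b_n(t,\phi^n_t(y))\,dt+dB_t$ carries no Stratonovich correction, the noise being additive) gives
\begin{align*}
0 = (d_t u_n)(t,\phi^n_t(y)) + \nabla u_n(t,\phi^n_t(y))\cdot b_n(t,\phi^n_t(y))\,dt + \sum_{i=1}^d e_i\cdot\nabla u_n(t,\phi^n_t(y))\circ dB_t^{(i)}.
\end{align*}
Since $y\mapsto\phi^n_t(y)$ is onto, substituting $y=(\phi^n_t)^{-1}(x)$ shows that $u_n$ satisfies the Stratonovich transport equation, hence its equivalent It\^{o} form \eqref{integralSTEito} with $b$ replaced by $b_n$. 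I would then pass to the limit $n\to\infty$: the uniform $W^{2,p}$-bounds from Theorem~\ref{sobreg} together with the stability of flows under $b_n\to b$ give strong convergence of $u_n$ and $\nabla u_n$ and weak convergence of $\Delta u_n$ in the relevant $L^p(\Omega\times U)$ spaces, which lets one pass to the limit in \eqref{integralSTEito} tested against spatial test functions, and in the stochastic integral via the It\^{o} isometry; the $W^{2,p}_{loc}$-regularity of $u$ then upgrades the resulting weak-in-$x$ identity to the pointwise a.e.\ form \eqref{integralSTE}.

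For uniqueness, let $v$ be any classical solution, so that by~(3) the random field satisfies, at each fixed $x$, $d_t v(t,x)=-b(t,x)\cdot\nabla v(t,x)\,dt-\sum_i e_i\cdot\nabla v(t,x)\circ dB_t^{(i)}$ with $v(0,\cdot)=u_0$. I would apply the It\^{o}--Wentzell (generalized It\^{o}) formula to the composition $t\mapsto v(t,\phi_t(y))$. In Stratonovich form the spatial gradient couples to $d\phi_t(y)=b(t,\phi_t(y))\,dt+dB_t$, the drift contribution $\nabla v\cdot b\,dt$ exactly cancels the transport term $-b\cdot\nabla v\,dt$ coming from $d_t v$, and the two Stratonovich noise terms $-\sum_i e_i\cdot\nabla v\circ dB_t^{(i)}$ and $\sum_i e_i\cdot\nabla v\circ dB_t^{(i)}$ cancel as well; hence $d[v(t,\phi_t(y))]=0$. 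Therefore $v(t,\phi_t(y))=v(0,y)=u_0(y)$, and evaluating at $y=\phi_t^{-1}(x)$ gives $v(t,x)=u_0(\phi_t^{-1}(x))=u(t,x)$ for a.e.\ $(\omega,x)$.

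I expect the main obstacle in both parts to be the rigorous justification of the Stratonovich/It\^{o}--Wentzell chain rule for a field that is only $W^{2,p}_{loc}$ in space rather than classically $C^2$. This is precisely where the strength of Theorem~\ref{sobreg} enters: $W^{2,p}_{loc}$ for every $p$ embeds by \eqref{embed} into $C^{1,\alpha}_{loc}$, so $\nabla u$ (respectively $\nabla v$) is genuinely continuous and the transport integrals in \eqref{integralSTE} are classically well defined, while the second derivatives entering the It\^{o} correction $\tfrac12\Delta$ exist in $L^p$ with all moments finite. The chain rule is then obtained by the mollification scheme above, the uniform moment bounds ensuring that the commutator errors (in the spirit of \cite{FGP10}) vanish and that no regularity is lost in the limit.
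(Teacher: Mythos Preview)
Your argument is correct in outline, but it takes a genuinely different route from the paper's. The paper does not verify \eqref{integralSTE} by computing along characteristics and mollifying; instead it starts from the \emph{weak} formulation \eqref{weakSTE}, which is already known to be satisfied by $u(t,x)=u_0(\phi_t^{-1}(x))$ from \cite{FGP10}, and then simply integrates by parts in $x$ against the test function $\theta$: the $W^{2,p}_{loc}$-regularity of $x\mapsto u(t,x)$ (from Theorem~\ref{sobreg} applied to $\phi_t^{-1}$) legitimises moving the spatial derivatives off $\theta$ and onto $u$, so that \eqref{integralSTEito} holds after pairing with an arbitrary $\theta$, hence pointwise a.e. Uniqueness is not reproved either; it is inherited from the weak uniqueness in \cite{FGP10}, since any classical solution in the sense of Definition~\ref{solutionDefinition} is in particular a weak solution.

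By contrast, you work self-containedly: you redo existence via the Stratonovich chain rule along $\phi^n_t$ and a limit $n\to\infty$, and you redo uniqueness via an It\^{o}--Wentzell argument on $v(t,\phi_t(y))$. The paper's route is considerably shorter and avoids the technical point you correctly flag --- justifying the chain rule/It\^{o}--Wentzell formula when the field is only $W^{2,p}_{loc}$ rather than $C^2$ --- by outsourcing the dynamical identity to the weak theory and using the regularity only for a static integration by parts. Your route, on the other hand, is more transparent about the mechanism (constancy along characteristics) and does not depend on \cite{FGP10} as a black box, at the price of the extra approximation work you describe.
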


\begin{proof}
By the above discussion we know that for every test function $\theta \in C_0^{\infty}(\mathbb{R}^d)$, the equation (\ref{weakSTE}) is satisfied $P$-a.s. by $u(t,x) = u_0(\phi_t^{-1}(x))$. We now choose $\tilde{\Omega}$ such that $x \mapsto \phi_t^{-1}(x)$ is in $W^{2,p}_{loc}(\mathbb{R}^d)$ on $\tilde{\Omega}$. Then we get that $u$ satisfies condition (i) and (ii) of Definition \ref{solutionDefinition}, and by integration by parts we have
\begin{align*}
\int_{\R^d} u(t,x)\theta(x)dx &= \int_{\R^d} u_0(x) \theta(x)dx\\
\notag&- \int_0^t \int_{\R^d} \nabla u(s,x) \cdot b(t,x) \theta (x)dxds\\
\notag &-\sum_{i=1}^d \int_0^t \left( \int_{\R^d} \nabla u(s,x)\theta(x) dx\right) \circ dB_s^{(i)} .
\end{align*}

or equivalently
\begin{align*}
\int_{\R^d} u(t,x)\theta(x)dx &= \int_{\R^d} u_0(x) \theta(x)dx\\
\notag&- \int_{\R^d}\int_0^t \nabla u(s,x) \cdot b(t,x) \theta (x)dsdx\\
\notag &-\int_{\R^d} \sum_{i=1}^d \int_0^t   \nabla u(s,x)t dB_s^{(i)} \theta(x) dx - \frac{1}{2} \int_{\mathbb{R}^d} \int_0^t \Delta u(s,x) ds \theta(x)dx.
\end{align*}

Since $\theta$ was arbitrary, this proves the claim.
\end{proof}

\textbf{Acknowledgement:} We would like to thank the referees for their valuable comments and suggestions.




\begin{thebibliography}{99}

\bibitem{AP14} O. V. Aryasova, A. Y. Pilipenko, \textit{On differentiability of stochastic flow for a multidimensional SDE with discontinuous drift}, Electron. Commun. Probab. {\bf 19}(45) (2014) 1--17.

\bibitem{Bally2011}  V. Bally, L. Caramellino, \textit{Riesz transform and integration by parts formulas for random variables}, Stochastic Processes and their Applications {\bf 121} (2011), 1332--1355.

\bibitem{BH91} N. Bouleau, F. Hirsch, \textit{Dirichlet Forms and Analysis on Wiener Space}, Walter de Gruyter, Berlin/New York, 1991.

\bibitem{DOP08} G. Di Nunno, B. \O ksendal, F. Proske, \textit{Malliavin Calculus for L\'{e}vy Processes with Applications to Finance.} Springer (2008).

\bibitem{Ein2000}  N. Eisenbaum, \textit{Integration with respect to local time}, Potential Analysis {\bf 13} (2000) 303--328.

\bibitem{Evan10} L. C. Evans, \textit{Partial Differential Equations}, AMS. Graduate Studies in Mathematics, Vol. 9. 2nd. Ed. (2010).

\bibitem{FGP10} F. Flandoli, M. Gubinelli, E. Priola, \textit{Well-posedness of the transport equation by stochastic perturbation} Inventiones Mathematicae, {\b 180}(1) (2010) 1--53.

\bibitem{FP10} N. Fournier, J. Printems, \textit{Absolute continuity for some one-dimensional processes} Inventiones Mathematicae, {\bf 180}(1) (2010) 1--53.

\bibitem{Kar98} I. Karatzas, S. E. Shreve, \textit{Brownian Motion and Stochastic Calculus.} Springer, 2n ed. (1998).

\bibitem{KHSS97} A. Kohatsu-Higa, M. Sanz-Sol\'{e}, \textit{Existence and regularity of density for solutions to stochastic differential equations with boundary conditions.} Stochastics Stochastics Rep. {\bf 60}(1-2) (1997) 1--22.

\bibitem{Kus10} S. Kusuoka, \textit{Existence of densities of solutions of stochastic differential equations by Malliavin calculus} Journal Of Functional Analysis,{\bf 258}(3) (2010) 758--784.

\bibitem{Kus82} S. Kusuoka, D. Stroock, \textit{Application of the Malliavin calculus, part I}, in: Proceedings of the Taniguchi Intern. Symp. on Stochastic Analysis, Kyoto and Katata, 1982, 271--306.

\bibitem{Leo09} G. Leoni, \textit{A first course in Sobolev spaces.} Graduate Studies in Mathematics, Vol. 105, AMS, (2009).

\bibitem{Mall78}  P. Malliavin, \textit{ Stochastic calculus of variations
and hypoelliptic operators.} In: Proc. Inter. Symp. on Stoch. Diff.
Equations, Kyoto 1976, Wiley, (1978), 195--263.
  
\bibitem{Mall97}  P. Malliavin, \textit{Stochastic Analysis.} Springer (1997)

\bibitem{Men11} O. Menoukeu-Pamen, T. Meyer-Brandis, T. Nilssen, F. Proske and T. Zhang, \textit{A variational approach to the construction and Malliavin differentiability of strong solutions of SDE's.} Mathematische Annalen, {\bf 357}(2), (2013) 761--799.

 \bibitem{MBP10}  T. Meyer-Brandis, F. Proske, \textit{Construction of strong solutions of SDE's via Malliavin calculus.} Journal of Funct. Anal.
  {\bf 258} (2010), 3922--3953.

\bibitem{MNP14}  S. E. A. Mohammed, T. Nilssen, F. Proske, \textit{Sobolev Differentiable Stochastic Flows for SDE's with Singular Coefficients: Applications to the Transport Equation.} Ann. Probab. {\bf 43}(3) (2015), 1535--1576.

\bibitem{Nil13} T. Nilssen, \textit{One-dimensional SDE's with Discontinuous, Unbounded Drift and Continuously Differentiable Solutions to the Stochastic Transport Equation}. Preprint University of Oslo (2012).

 \bibitem{Nua10} D. Nualart, \textit{The Malliavin Calculus and Related Topics.} 2nd Ed. Springer (2010).
 
\bibitem{MSS05} M. Sanz-Sol\'{e}, \textit{Malliavin Calculus with Applications to Stochastic Partial Differential Equations}, Fundamental Sicences, Mathematics. EPFL Press, CRC Press, 2005.

\bibitem{Zhang11} X. Zhang, \textit{Stochastic Homeomorphism Flows of SDEs with Singular Drifts and Sobolev Diffusion Coefficients}, Elect. J. Prob. {\bf 16} (2011), Paper no. 38, 1096--1116.

\end{thebibliography}
\end{document}